\tikzstyle{doublearr}=[latex-latex,red, line width=0.5pt]
\tikzstyle{doublearr2}=[latex-latex,green!80!black, line width=0.5pt]
\newcommand{\dis}{\text{\textsc{dis}}}
\newcommand{\com}{\text{\textsc{com}}}
\newcommand{\qq}{\mathbf{q}}
\newcommand{\QQ}{\mathbf{Q}}
\newcommand{\bQ}{\bar{Q}}
\newcommand{\XX}{\mathbf{X}}
\newcommand{\GG}{\mathbf{G}}
\newcommand{\ER}{\mathrm{ER}}
\def\sss{}
\newcommand{\Op}{\mathrm{O}_{\sss P}}
\newcommand{\op}{\mathrm{o}_{\sss P}}
\newcommand{\N}{\mathbbm{N}}
\newcommand{\R}{\mathbbm{R}}
\newcommand{\cS}{\mathcal{S}}
\newcommand{\cE}{\mathcal{E}}
\newcommand{\pto}{\ensuremath{\xrightarrow{\mathbbm{P}}}}  
\newcommand{\dto}{\ensuremath{\xrightarrow{\mathcal{L}}}}  
\newcommand\Pro[1]{\mathbbm{P}\left(#1\right)}  
\newtheorem{theorem}{Theorem}
\newtheorem{corollary}[theorem]{Corollary}
\newtheorem{lemma}[theorem]{Lemma}
\newtheorem{proposition}[theorem]{Proposition}
\newtheorem{definition}{Definition}
\newcommand{\CJSQ}{\mathrm{CJSQ}}
\let\plainqed\qedsymbol
\begin{document}

\title{Asymptotically Optimal Load Balancing Topologies}

\author[1]{Debankur Mukherjee\footnote{d.mukherjee@tue.nl;\ \ $^\dagger$s.c.borst@tue.nl;\ \  $^\ddagger$j.s.h.v.leeuwaarden@tue.nl}}
\author[1,2]{Sem C.~Borst$^\dagger$}
\author[1]{Johan S.H.~van Leeuwaarden$^\ddagger$}
\affil[1]{
Eindhoven University of Technology, The Netherlands}
\affil[2]{Nokia Bell Labs, Murray Hill, NJ, USA}

\renewcommand\Authands{, }

\maketitle

\begin{abstract}
We consider a system of $N$~servers inter-connected by some underlying graph topology~$G_N$.  Tasks with unit-mean exponential processing times arrive at the various servers as independent Poisson processes of rate~$\lambda$. Each incoming task is irrevocably assigned to whichever server has the smallest number of tasks among the one where it appears and its neighbors in~$G_N$.  

The above model arises in the context of load balancing in large-scale cloud networks and data centers, and has been extensively investigated in the case $G_N$ is a clique.  Since the servers are exchangeable in that case, mean-field limits apply, and in particular it has been proved that for any $\lambda < 1$, the fraction of servers with two or more tasks vanishes in the limit as $N \to \infty$. For an arbitrary graph $G_N$, mean-field techniques break down, complicating the analysis, and the queue length process tends to be worse than for a clique.  Accordingly, a graph $G_N$ is said to be $N$-optimal or $\sqrt{N}$-optimal when the queue length process on $G_N$ is equivalent to that on a clique on an $N$-scale or $\sqrt{N}$-scale, respectively.

We prove that if $G_N$ is an Erd\H{o}s-R\'enyi random graph with average degree $d(N)$, then with high probability it is $N$-optimal and $\sqrt{N}$-optimal if $d(N) \to \infty$ and $d(N) / (\sqrt{N} \log(N)) \to \infty$ as $N \to \infty$, respectively.  This demonstrates that optimality can be maintained at $N$-scale and $\sqrt{N}$-scale while reducing the number of connections by nearly a factor $N$ and $\sqrt{N} / \log(N)$ compared to a clique, provided the topology is suitably random. It is further shown that if $G_N$ contains $\Theta(N)$ bounded-degree nodes, then it cannot be $N$-optimal.  In addition, we establish that an arbitrary graph $G_N$ is $N$-optimal when its minimum degree is $N - o(N)$, and may not be $N$-optimal even when its minimum degree is $c N + o(N)$ for any $0 < c < 1/2$. Simulation experiments are conducted for various scenarios to corroborate the asymptotic results.
\end{abstract}

\section{Introduction}

\paragraph*{Background and motivation.}

In the present paper we explore the impact of the network topology
on the performance of load-balancing schemes in large-scale systems.
Load balancing algorithms play a key role in distributing service
requests or tasks (e.g.~compute jobs, data base look-ups, file
transfers, transactions) among servers in parallel-processing systems.
Well-designed load balancing schemes provide an effective mechanism
for improving relevant performance metrics experienced by users
while achieving high resource utilization levels.
The analysis and design of load balancing algorithms has attracted
strong renewed interest in recent years, mainly urged by huge
scalability challenges in large-scale cloud networks and data
centers with immense numbers of servers.

In order to examine the impact of the network topology,
we focus on a system of $N$~servers inter-connected by some
underlying graph~$G_N$.
Tasks with unit-mean exponential processing times arrive at the various servers as independent Poisson processes
of rate~$\lambda$.
Each incoming task is immediately assigned to whichever server has
the smallest number of tasks among the one where it arrives and its
neighbors in~$G_N$.  

The above model has been extensively investigated in case $G_N$ is a clique.
In that case, each task is assigned to the server with the smallest
number of tasks across the entire system, which is commonly referred
to as the Join-the-Shortest Queue (JSQ) policy.
Under the above Markovian assumptions, the JSQ policy has strong
stochastic optimality properties~\cite{EVW80,Winston77, towsley,Towsley95}.
Specifically, the queue length process is better balanced
and smaller in a majorization sense than under any alternative
non-anticipating task assignment strategy that does not have
advance knowledge of the service times.
By implication, the JSQ policy minimizes the mean overall queue length,
and hence the mean waiting time as well.
Since the servers are exchangeable in a clique topology,
the queue length process is in fact quite tractable via mean-field limits.
In particular, it can be shown that for any $\lambda < 1$, the stationary fraction
of servers with two or more tasks as well as the mean waiting time
vanish in the limit as $N \to \infty$.

Unfortunately, however, implementation of the JSQ policy in a clique
topology raises two fundamental scalability concerns.
First of all, for each incoming task the queue lengths need to be
checked at all servers, giving rise to a prohibitive communication
overhead in large-scale systems with massive numbers of servers.
Second, executing a task commonly involves the use of some data,
and storing such data for all possible tasks on all servers will
typically require an excessive amount of storage capacity~\cite{XYL16, WZYTZ16}.
These two burdens can be effectively mitigated in sparser graph
topologies where tasks that arrive at a specific server~$i$ are only
allowed to be forwarded to a subset of the servers ${\mathcal N}_i$.
For the tasks that arrive at server~$i$, queue length information
then only needs to be obtained from servers in ${\mathcal N}_i$,
and it suffices to store replicas of the required data on the
servers in ${\mathcal N}_i$.
The subset ${\mathcal N}_i$ containing the peers of server~$i$ can
be naturally viewed as its neighbors in some graph topology~$G_N$.
In the present paper we consider the case of undirected graphs,
but most of the analysis can be extended to directed graphs.

While sparser graph topologies relieve the scalability issues
associated with a clique, they defy classical mean-field techniques, and the queue length process will be worse
(in the majorization sense) because of the limited connectivity.
Surprisingly, however, even much sparser graphs can asymptotically
match the optimal performance of a clique, provided they are
suitably random, as we will further describe below.

\paragraph*{Related work.}

The above model has been studied in \cite{G15,T98}, focusing
on certain fixed-degree graphs and in particular ring topologies.
The results demonstrate that the flexibility to forward tasks
to a few neighbors, or even just one, with possibly shorter queues
significantly improves the performance in terms of the waiting time
and tail distribution of the queue length.
This resembles the so-called `power-of-two' effect in the classical
case of a complete graph where tasks are assigned to the shortest
queue among $d$~servers selected uniformly at random.
As shown by Mitzenmacher \cite{Mitzenmacher96,Mitzenmacher01}
and Vvedenskaya {\em et al.}~\cite{VDK96}, such a `power-of-$d$' scheme
provides a huge performance improvement over purely random assignment,
even when $d = 2$, in particular super-exponential tail decay,
translating into far better waiting-time performance.
Further related problems have been investigated
in \cite{M96focs,ACM01,KM00,MPS02}.
However, the results in \cite{G15,T98} also establish that
the performance sensitively depends on the underlying graph topology,
and that selecting from a fixed set of $d - 1$ neighbors typically
does not match the performance of re-sampling $d - 1$ alternate
servers for each incoming task from the entire population,
as in the power-of-$d$ scheme in a complete graph.
In contrast, when the number of neighbors $d(N)$ grows with the total number of servers~$N$, our results indicate that the performance impact of the graph topology diminishes, and that, remarkably, a broad class of suitably random topologies match the asymptotically optimal performance that is achieved in a clique or when $d(N)$ alternate servers are resampled for each incoming task~\cite{MBLW16-3}.

If tasks do not get served and never depart but simply accumulate,
then our model as described above amounts to a so-called
balls-and-bins problem on a graph.
Viewed from that angle, a close counterpart of our problem is
studied in Kenthapadi and Panigrahy~\cite{KP06}, where in our
terminology each arriving task is routed to the shortest
of $d \geq 2$ randomly selected neighboring queues. 
In this setup they show that if the underlying graph is almost
regular with degree $N^{\varepsilon}$, where $\varepsilon$ is not
too small, the maximum number of balls in a bin scales as
$\log(\log(N))/\log(d) + O(1)$.
This scaling is the same as in the case when the underlying graph
is a clique~\cite{ABKU94}.
In a more recent paper by Peres, Talwar, and Weider~\cite{PTW15} the balls-and-bins problem has been analyzed in the context of a $(1+\beta)$-choice process, where each ball goes to a random bin with probability $1-\beta$ and to the lesser loaded of the two bins corresponding to the nodes of a random edge of the graph with probability $\beta$.
In particular, for this process they show that the difference between the maximum number of balls in a bin and the typical number of balls in the bins  is $O(\log(N)/\sigma)$, where $\sigma$ is the edge expansion property of the underlying graph.
The classical balls-and-bins problem with a power-of-$d$ scheme
(often referred to as `multiple-choice' algorithm), without any graph
topology, has also been studied extensively~\cite{ABKU94,BCSV06}.
Just like in the queueing scenario mentioned above, the power-of-$d$
scheme provides a major improvement over purely random assignment
($d = 1$) where the maximum number of balls in a bin scales as
$\log(N)/\log(\log(N))$~\cite{G81}.
Several further variations and extensions have been considered
subsequently \cite{V99,ACMR95,BCSV00,CMS95,DM93,FPSS05,PR04,P05}, and we refer to~\cite{W17} for a recent survey.


As alluded to above, there are natural parallels between the
balls-and-bins setup and the queueing scenario as considered
in the present paper.
These commonalities are for example reflected in the fact that
the power-of-$d$ scheme yields a similar dramatic performance
improvement over purely random assignment in both settings.
However, there are also quite fundamental differences between the
balls-and-bins setup and the queueing scenario, even in a clique
topology, besides the obvious contrasts in the performance metrics.
The distinction is for example evidenced by the fact that a simple
round-robin strategy produces a perfectly balanced allocation in
a balls-and-bins setup but is far from optimal in a queueing scenario.
In particular, the stationary fraction of servers with two or more
tasks under a round-robin strategy remains positive in the limit
as $N \to \infty$, whereas it vanishes under the JSQ policy.
On a related account, since tasks get served and eventually depart
in a queueing scenario, less balanced allocations with a large
portion of vacant servers will generate fewer service completions
and result in a larger total number of tasks.
Thus different schemes yield not only various degrees of balance,
but also variations in the aggregate number of tasks in the system.
These differences arise not only in case of a clique,
but also in arbitrary graph topologies, and hence our problem
requires a fundamentally different approach than developed
in \cite{KP06} for the balls-and-bins setup. 
Moreover, \cite{KP06} considers only the scaling of the maximum
queue length, whereas we analyze a more detailed time-varying
evolution of the entire system along with its stationary behavior.

\paragraph*{Approach and key contributions.}

As mentioned above, the queue length process in a clique will be
better balanced and smaller (in a majorization sense) than
in an arbitrary graph~$G_N$.
Accordingly, a graph $G_N$ is said to be $N$-optimal or
$\sqrt{N}$-optimal when the queue length process on $G_N$ is equivalent
to that on a clique on an $N$-scale or $\sqrt{N}$-scale, respectively.
Roughly speaking, a graph is $N$-optimal if the \emph{fraction}
of nodes with $i$~tasks, for $i=0,1,\ldots$, behaves as in a clique
as $N\to\infty$.
Since the latter fraction is zero in the limit for all $i \geq 2$
in a clique in stationarity, the fraction of servers with two or
more tasks vanishes in any graph that is $N$-optimal, implying that
the mean waiting time vanishes as well.
Furthermore, recent results for the JSQ policy~\cite{EG15} imply
that in a clique of $N$~nodes in the heavy-traffic regime
the number of nodes with zero tasks and that with two tasks both
scale as $\sqrt{N}$ as $N \to \infty$.
Again loosely speaking, a graph is $\sqrt{N}$-optimal if in the
heavy-traffic regime the number of nodes with zero tasks
and that with two tasks when scaled by $\sqrt{N}$ both evolve as
in a clique as $N \to\infty$.
Formal definitions of asymptotic optimality on an $N$-scale or
$\sqrt{N}$-scale will be introduced in Section~\ref{sec:model}.

As one of the main results, we will demonstrate that, remarkably,
asymptotic optimality can be achieved in much sparser
Erd\H{o}s-R\'enyi random graphs (ERRGs).
We prove that a sequence of ERRGs indexed by the number of vertices~$N$
with $d(N) \to \infty$ as $N \to \infty$, is $N$-optimal.
We further establish that the latter growth condition for the average
degree is in fact necessary in the sense that any graph sequence that
contains $\Theta(N)$ bounded-degree vertices cannot be $N$-optimal.
This implies that a sequence of ERRGs with finite average degree
cannot be $N$-optimal.
The growth rate condition is more stringent for optimality
on $\sqrt{N}$-scale in the heavy-traffic regime.
Specifically, we prove that a sequence of ERRGs indexed by the number
of vertices $N$ with $d(N) / (\sqrt{N} \log(N)) \to \infty$
as $N \to \infty$, is $\sqrt{N}$-optimal.

The above results demonstrate that the asymptotic optimality of cliques
on an $N$-scale and $\sqrt{N}$-scale can be achieved in graphs that are far from fully connected,
where the number of connections is reduced by nearly a factor $N$
and $\sqrt{N}/\log(N)$, respectively, provided the topologies are
suitably random in the ERRG sense.
This translates into equally significant reductions in communication
overhead and storage capacity, since both are roughly proportional
to the number of connections.


While considerably sparser graphs can achieve asymptotic optimality in the
presence of randomness, the worst-case graph instance may even
in very dense regimes (high average degree) not be optimal.
In particular, we prove that any graph sequence with minimum degree
$N-o(N)$ is $N$-optimal, but that for any $0 < c < 1/2$ one can
construct graphs with minimum degree $c N + o(N)$ which are not
$N$-optimal for some $\lambda < 1$.
Loosely speaking, this happens due to an imbalance of arrival flows between two large parts of the network, as will be explained in Section~\ref{sec:det} in greater detail.

The key challenge in the analysis of load balancing on arbitrary graph
topologies is that one needs to keep track
of the evolution of the number of tasks at each vertex along with their
corresponding neighborhood relationship.
This creates a major problem in constructing a tractable Markovian
state descriptor, and renders a classical mean-field analysis of such processes elusive.
Consequently, even asymptotic results for load balancing processes
on an arbitrary graph have remained scarce so far.
To the best of our knowledge, the present paper is the first to establish the process-level as well as steady-state limits of the occupancy states have been rigorously established for a wide class of non-trivial (possibly random) topologies.
Since the mean-field techniques do not apply in the current scenario, we take a radically different approach and aim to compare the load
balancing process on an arbitrary graph with that on a clique.
Specifically, rather than analyze the behavior for a given class
of graphs or degree value, we explore for what types of topologies
and degree properties the performance is asymptotically similar
to that in a clique.

Our proof methodology builds on some recent advances in the analysis
of the power-of-$d$ algorithm where $d = d(N)$ grows with $N$
\cite{MBLW16-3,MBLW16-4}.
Specifically, we view the load balancing process on an arbitrary
graph as a `sloppy' version of that on a clique, and thus construct
several other intermediate sloppy versions.
By constructing novel couplings, we develop a method of comparing
the load balancing process on an arbitrary graph and that on a clique. 
In particular, we bound the difference between the fraction of vertices
with $i$ or more tasks in the two systems for $i = 1, 2, \dots$,
to obtain asymptotic optimality results. 
From a high level, conceptually related graph conditions for
asymptotic optimality were examined using quite different
techniques by Tsitsiklis and Xu \cite{TX17,TX13} in a dynamic
scheduling framework (as opposed to load balancing context).

\paragraph*{Organization of the paper.}

The remainder of the paper is organized as follows.
In Section~\ref{sec:model} we 
present a detailed model description
and introduce some useful notation and preliminaries.
Sufficient and necessary criteria for asymptotic optimality of deterministic graph sequences are developed in Sections~\ref{sec:det} and~\ref{sec:necessary}, respectively.
In Section~\ref{sec:random} we analyze asymptotic optimality of a sequence of random graph topologies.
In Section~\ref{sec:simulation} we present simulation experiments to support the analytical results, and examine the performance of topologies that are not analytically tractable.
We make a few brief concluding remarks and offer some suggestions
for further research in Section~\ref{sec:conclusion}.
Proofs of statements marked ($\bigstar$) have been provided in the
appendix.
We adopt the usual notations O($\cdot$), o($\cdot$), $\omega(\cdot)$, and $\Omega(\cdot)$ to describe asymptotic comparisons.
For a sequence of probability measures $(\mathbb{P}_N)_{N\geq 1}$, the sequence of events $(\mathcal{E}_N)_{N\geq 1}$ is said to hold with high probability if $\mathbb{P}_N(\mathcal{E}_N)\to 1$ as $N\to\infty$.
Also, for some positive function $f(N):\N\to\R_+$, we write a sequence of random variables $X_N$ is $\Op(f(N))$ or $\op(f(N))$ if $\{X_N/f(N)\}_{N\geq 1}$ is a tight sequence of random variables or converges to zero as $N\to\infty$, respectively.
The symbols `$\dto$' and `$\pto$' will denote  convergences in distribution and in probability, respectively.

\section{Model description and preliminaries}\label{sec:model}

Let $\{G_N\}_{N\geq 1}$ be a sequence of simple graphs indexed by the number of vertices $N$.
For the $N$-th system with $N$ servers, we assume that the servers are inter-connected by the underlying graph topology $G_N$, where server $i$ is identified with vertex $i$ in $G_N$, $i=1,2,\ldots, N$.
Tasks with unit-mean exponential processing times arrive at the various servers as independent Poisson processes of rate $\lambda$.
 Each server has its own queue with a fixed buffer capacity $b$ (possibly infinite).
 When a task appears at a server $i$, it is immediately assigned to the server with the shortest queue among server $i$ and its neighborhood in $G_N$.
If there are multiple such servers, one of them is chosen uniformly at random.
If $b<\infty$, and server $i$ and all its neighbors have $b$ tasks (including the ones in service), then the newly arrived task is discarded.
The service order at each of the queues is assumed to be oblivious to the actual service times, e.g.~First-Come-First-Served (FCFS).

For $k = 1,\ldots, N$, denote by $X_k(G_N,t)$ the queue length at the $k$-th server at time $t$ (including the one possibly in service), and by $X_{(k)}(G_N,t)$ the queue length at the $k$-th ordered server at time $t$ when the servers are arranged in  nondecreasing order of their queue lengths (ties can be broken in some way that will be evident from the context).
Let $Q_i(G_N, t)$ denote the number of servers with queue length at least $i$ at time $t$, $i = 1, 2,\ldots, b$, and $q_i(G_N,t):=Q_i(G_N,t)/N$ denote the corresponding fractions.
It is important to note that $\{(q_i(G_N, t))_{i\geq 1}\}_{t\geq 0}$ is itself \emph{not} a Markov process, but the joint process $\{(q_i(G_N, t))_{i\geq 1}, (X_{k}(G_N,t))_{k=1}^N\}_{t\geq 0}$ is Markov.
\begin{proposition}
\label{prop:ergod}
For any $\lambda<1$, the joint system occupancy process $\{(q_i(G_N, t))_{i\geq 1}, (X_{k}(G_N,t))_{k=1}^N\}_{t\geq 0}$ has a unique steady state 
$((q_i(G_N, \infty))_{i\geq 1}, (X_{k}(G_N,\infty))_{k=1}^N)$.
Also, the sequence of marginal random variables $\{(q_i(G_N, \infty))_{i\geq 1}\}_{N\geq 1}$ is tight with respect to the $\ell_1$-topology.
\end{proposition}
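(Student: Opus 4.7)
The plan is to derive both conclusions from a single exponential-moment estimate obtained from a Lyapunov calculation that is independent of the topology~$G_N$. Since each coordinate $q_i(G_N,t)$ is a deterministic function of $\mathbf{X}(t) := (X_k(G_N,t))_{k=1}^N$, it suffices to study the continuous-time Markov chain $\mathbf{X}(t)$ on $\{0,1,\ldots,b\}^N$, with $b = \infty$ allowed. Irreducibility is immediate: any state can be drained to $(0,\ldots,0)$ through a finite sequence of service completions with no intervening arrivals, and from $(0,\ldots,0)$ one can reach any target state with positive probability by feeding in arrivals and routing each task to the desired vertex via the uniform random tie-breaking in JSQ.

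For positive recurrence I would use the exponential Lyapunov function $V(\mathbf{x}) := \sum_{k=1}^N e^{\alpha x_k}$ with parameter $\alpha \in (0, \log(1/\lambda))$. Writing $j^*(i)$ for the vertex in $\mathcal{N}_i \cup \{i\}$ chosen by JSQ on arrival at~$i$, the defining JSQ property $x_{j^*(i)} \leq x_i$ together with the identity $\sum_k \ind{x_k > 0}\, e^{\alpha x_k} = V(\mathbf{x}) - \#\{k : x_k = 0\}$ yields the graph-independent drift bound
\[
\mathcal{L}V(\mathbf{x}) \;\leq\; (e^\alpha - 1)(\lambda - e^{-\alpha})\, V(\mathbf{x}) + (1 - e^{-\alpha})\, N.
\]
The first coefficient is strictly negative for the stated range of~$\alpha$, so $\mathcal{L}V(\mathbf{x}) \leq -c\, V(\mathbf{x}) + CN$ for constants $c, C > 0$ depending only on $\lambda$ and $\alpha$. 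The sublevel set $\{V \leq (CN+1)/c\}$ is finite in $\N^N$, so Foster's criterion gives positive recurrence and hence the unique steady state required in the first half of the proposition.

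Taking expectations in stationarity from the same inequality produces the uniform exponential-moment bound $\sum_{k=1}^N \mathbbm{E}[e^{\alpha X_k(G_N, \infty)}] \leq N/(1 - \lambda e^\alpha)$, valid for every~$N$ and every~$G_N$. Combining the identity $\sum_{i \geq K} q_i(G_N, \infty) = \frac{1}{N} \sum_k (X_k(G_N, \infty) - K + 1)^+$ with a coordinate-wise Chernoff estimate then gives
\[
\mathbbm{E}\Bigl[\sum_{i \geq K} q_i(G_N, \infty)\Bigr] \;\leq\; \frac{e^{-\alpha K}}{(1-e^{-\alpha})(1 - \lambda e^\alpha)},
\]
uniformly in~$N$ and in the topology. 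Setting $K=1$ bounds $\mathbbm{E}[\|(q_i(G_N, \infty))_{i \geq 1}\|_1]$ uniformly, while letting $K \to \infty$ shows the $\ell_1$-tails are uniformly small. Markov's inequality together with the Kolmogorov characterization of pre-compact subsets of~$\ell_1$ (bounded norm plus uniformly small tails) then delivers tightness of $\{(q_i(G_N, \infty))_{i \geq 1}\}_{N \geq 1}$.

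The key subtlety is locating a Lyapunov function whose drift inequality is graph-independent, since the statement must hold uniformly over all~$G_N$. A linear choice $V = \sum x_k$ has drift $\lambda N - \#\{\text{busy servers}\}$, which need not be strictly negative outside a compact set when only a few servers are busy, a scenario JSQ routing on a sparse graph may well permit. A quadratic $V = \sum x_k^2$ produces a dissipative drift only for $\lambda < 1/2$ if one uses only the coordinatewise bound $x_{j^*(i)} \leq x_i$. The exponential form is precisely the functional for which this one JSQ inequality yields a strictly dissipative drift throughout the entire subcritical regime $\lambda < 1$, and which simultaneously supplies the geometric tail needed to upgrade the first-moment bound into $\ell_1$-tightness.
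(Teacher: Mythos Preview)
Your argument is correct but follows a genuinely different route from the paper. The paper never computes a drift: it invokes a stochastic comparison (the S-coupling in the appendix) showing that $\sum_{i\ge m}Q_i(G_N,t)\le\sum_{i\ge m}Q_i(G_N',t)$ for every $m\ge 1$, where $G_N'$ is the edgeless graph, i.e.\ $N$ independent subcritical M/M/1 queues. Ergodicity of the $G_N$-system then piggybacks on M/M/1 ergodicity, and the $\ell_1$-tail condition (via a cited tightness lemma equivalent to your Kolmogorov criterion) follows from the geometric M/M/1 tails through the same inequality. Your approach replaces that coupling by a direct exponential Lyapunov calculation, using only the pointwise JSQ inequality $X_{j^*(i)}\le X_i$ to obtain the graph-free bound $\mathcal{L}V\le -cV+CN$, and then extracts the uniform stationary moment $\sum_k\mathbbm{E}\big[e^{\alpha X_k(G_N,\infty)}\big]\le N/(1-\lambda e^\alpha)$. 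The paper's route is shorter once the coupling machinery is available and reduces everything to explicit M/M/1 facts; yours is self-contained, yields a quantitative exponential moment uniform in $N$ and in the topology, and your closing explanation of why linear or quadratic Lyapunov functions fail in the full range $\lambda<1$ is a nice diagnostic that the comparison argument hides. Two places worth tightening: your irreducibility sketch overclaims---building an arbitrary target configuration from $\mathbf 0$ under JSQ routing is not a one-liner---but the drain-to-$\mathbf 0$ direction already gives a unique closed communicating class, which is all that uniqueness of the steady state requires; and the passage from the drift inequality to $\pi(V)\le CN/c$ needs the standard truncation step to justify $\mathbbm{E}_\pi[\mathcal{L}V]=0$, which you gloss over.
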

\begin{proof}[Proof of Proposition~\ref{prop:ergod}]
Note that if $b<\infty$, the process $\{(q_i(G_N, t))_{i\geq 1}, (X_{k}(G_N,t))_{k=1}^N\}_{t\geq 0}$ is clearly ergodic for all $N\geq 1$.
When $b=\infty$, to prove the ergodicity of the process, first fix any $N\geq 1$ and observe that 
the ergodicity of the queue length processes at the various vertices amounts to proving the ergodicity of the total number of tasks in the system.
Using the S-coupling and Proposition~\ref{prop:det ord} in Appendix~\ref{app:stoch} we obtain for all $t>0$,
\begin{equation}\label{eq:comparison}
\sum_{i=m}^\infty Q_i(G_N,t) \leq \sum_{i=m}^\infty Q_i(G_N',t),\quad\mbox{for all } m = 1,2, \ldots,
\end{equation}
provided the inequality holds at time $t=0$,
where $G_N'$ is the collection of $N$ isolated vertices.
Thus in particular, the total number of tasks in the system with $G_N$ is upper bounded by that with $G_N'$.
Now the queue length process on $G_N'$ is clearly ergodic since it is the collection of independent subcritical M/M/1 queues.
Next, for the $\ell_1$-tightness of $\{(q_i(G_N, \infty))_{i\geq 1}\}_{N\geq 1}$, we will use the following tightness criterion:
Define 
\begin{equation}\label{eq:s-space}
\mathcal{X} =
\left\{\qq \in [0, 1]^b: q_i \leq q_{i-1} \mbox{ for all } i = 2, \dots, b, \mbox{ and } \sum_{i=1}^b q_i < \infty\right\}
\end{equation}
as the set of all possible fluid-scaled occupancy states equipped with $\ell_1$-topology.
\begin{lemma}[{\cite[Lemma 4.7]{MBLW16-3}}]
\label{lem:tightcond}
Let $\big\{\XX^N\big\}_{N\geq 1}$ be a sequence of random variables in $\mathcal{X}$. Then the following are equivalent: 
\begin{enumerate}[{\normalfont (i)}]
\item $\big\{\XX^N\big\}_{N\geq 1}$ is tight with respect to product topology, and
for all $\varepsilon>0,$
\begin{equation}\label{eq:smalltail}
\lim_{k\to\infty}\varlimsup_{N\to\infty}\mathbb{P}\Big(\sum_{i\geq k}X_i^N>\varepsilon\Big) = 0.
\end{equation}
\item $\big\{\XX^N\big\}_{N\geq 1}$ is tight with respect to $\ell_1$ topology.
\end{enumerate}
\end{lemma}
Note that since $(q_i(G_N, \infty))_{i\geq 1}$ takes value in $[0,1]^\infty$, which is compact with respect to the product topology, Prohorov's theorem implies that $\big\{(q_i(G_N, \infty))_{i\geq 1}\big\}_{N\geq 1}$ is tight with respect to the product topology.
To verify the condition in~\eqref{eq:smalltail}, note that for each $m\geq 1$, Equation~\eqref{eq:comparison} yields
\begin{align*}
&\varlimsup_{N\to\infty}\mathbb{P}\Big(\sum_{i\geq m}q_i(G_N, \infty)>\varepsilon\Big)
\leq \varlimsup_{N\to\infty}\mathbb{P}\Big(\sum_{i\geq m}q_i(G_N', \infty)>\varepsilon\Big)
= (1-\lambda)\sum_{i\geq m}\lambda^i.
\end{align*}
Since $\lambda<1$, taking the limit $k\to\infty$, the right side of the above inequality tends to zero, and hence, the condition in~\eqref{eq:smalltail} is satisfied.
\end{proof}

\noindent
{\bf Asymptotic behavior of occupancy processes in cliques.}
We now describe the behavior of the occupancy processes on a clique
as the number of servers $N$ grows large.
Rigorous descriptions of the limiting processes are provided in Appendix~\ref{app:jsq}.

The behavior on $N$-scale is observed in terms of the fractions 
$q_i(G_N, t) = Q_i(G_N, t)/N$ of servers with queue length
at least~$i$ at time~$t$.
When $\lambda<1$, on any finite time interval,
\begin{equation}\label{eq:fluid-conv}
\big\{(q_1(K_N,t), q_2(K_N,t),\ldots)\big\}_{t\geq 0} \dto \big\{(q_1(t), q_2(t),\ldots)\big\}_{t\geq 0},
\end{equation}
as $N\to\infty$,
where $(q_1(\cdot), q_2(\cdot),\ldots)$ is some deterministic process.
Furthermore, in steady state 
\begin{equation}\label{eq:clique-stn}
q_1(K_N,\infty)\pto \lambda \quad \mbox{and} \quad
q_i(K_N,\infty)\pto 0 \ \mbox{ for all } i = 2, \dots, b,
\end{equation}
as  $N\to\infty$.
Note that $q_1(K_N,\cdot)$ is the fraction of non-empty servers. 
Thus $q_1(K_N,\infty)$ is the steady-state scaled departure rate
which should be equal to the scaled arrival rate $\lambda$.
Surprisingly, however, we observe that the steady-state fraction
of servers with a queue length of two or larger is asymptotically
negligible.

To analyze the behavior on $\sqrt{N}$-scale, we consider
a heavy-traffic scenario (a.k.a.~Halfin-Whitt regime) where the
arrival rate at each server is given by $\lambda(N)/N$ with
\begin{equation}\label{eq:HW}
(N - \lambda(N)) / \sqrt{N} \to \beta>0\quad\text{as}\quad N \to \infty.
\end{equation}
In order to describe the behavior in the limit, let 
\[\bar{\QQ}(G_N,t) =
\big(\bar{Q}_1(G_N,t), \bar{Q}_2(G_N,t), \dots, \bar{Q}_b(G_N,t)\big)\]
be a properly centered and scaled version of the occupancy process
$\QQ(G_N,t)$, with 
\begin{equation}\label{eq:HWOcc}
\bar{Q}_1(G_N,t) = - \frac{N-Q_1(G_N,t)}{ \sqrt{N}},\qquad \bar{Q}_i(G_N,t) =\frac{ Q_i(G_N,t)}{\sqrt{N}},
\end{equation}
$i = 2, \dots, b$.
The reason why $Q_1(\cdot,\cdot)$ is centered around~$N$ while $Q_i(\cdot,\cdot)$,
$i = 2, \dots, b$, are not, is because for $G_N=K_N$, the fraction of servers with a queue length of exactly one tends to one, whereas the fraction of servers with a queue length of two or larger tends to zero as $N\to\infty$, as mentioned above.
Recent results for $\QQ(K_N,t)$~\cite{EG15} show that from a suitable starting state, 
\begin{equation}\label{eq:diff-conv}
\begin{split}
&\big\{(\bQ_1(K_N,t), \bQ_2(K_N,t),\bQ_3(K_N,t),\bQ_4(K_N,t),\ldots)\big\}_{t\geq 0}\dto \big\{(\bQ_1(t), \bQ_2(t),0,0,\ldots)\big\}_{t\geq 0},
\end{split}
\end{equation}
 as $N\to\infty$, where $(\bQ_1(\cdot), \bQ_2(\cdot))$ is some diffusion process.
A precise description of the limiting diffusion process is provided in Theorem~\ref{diffusionjsqd} in Appendix~\ref{app:jsq}.
This implies that over any finite time interval,
there will be $O_P(\sqrt{N})$ servers with queue length zero
and $O_P(\sqrt{N})$ servers with a queue length of two or larger,
and hence all but $O_P(\sqrt{N})$ servers have a queue length of exactly one.

\vspace{.25cm}
\noindent
{\bf Asymptotic optimality.}
As stated in the introduction, a clique is an optimal load balancing topology, as the occupancy process is better balanced and smaller (in a majorization sense) than in any other graph topology.
In general the optimality is strict, but it turns out that near-optimality can be achieved asymptotically in a broad class of other graph topologies.
Therefore, we now introduce two notions of \emph{asymptotic optimality}, which will be useful to characterize the performance in large-scale systems. 

\begin{definition}[{Asymptotic optimality}]
A graph sequence $\GG = \{G_N\}_{N\geq 1}$  is called `asymptotically optimal on $N$-scale' or `$N$-optimal', if for any $\lambda<1$, on any finite time interval, the scaled occupancy process $(q_1(G_N,\cdot), q_2(G_N,\cdot),\ldots)$ converges weakly to the process $(q_1(\cdot), q_2(\cdot),\ldots)$ given by~\eqref{eq:fluid-conv}.

Moreover, a graph sequence $\GG = \{G_N\}_{N\geq 1}$  is called `asymptotically optimal on $\sqrt{N}$-scale' or `$\sqrt{N}$-optimal', if for any $\lambda(N)$ satisfying~\eqref{eq:HW}, on any finite time interval, the centered scaled occupancy process   $(\bQ_1(G_N,\cdot), \bQ_2(G_N,\cdot),\ldots)$ as in~\eqref{eq:HWOcc} converges weakly to the process $(\bQ_1(\cdot), \bQ_2(\cdot),\ldots)$ given by~\eqref{eq:diff-conv}.
\end{definition}
\noindent
Intuitively speaking, if a graph sequence is $N$-optimal or $\sqrt{N}$-optimal, then in some sense, the associated occupancy processes are indistinguishable from those of the sequence of cliques on $N$-scale or $\sqrt{N}$-scale.
In other words, on any finite time interval their occupancy processes can differ from those in cliques by at most $o(N)$ or $o(\sqrt{N})$, respectively. 
For brevity, $N$-scale and $\sqrt{N}$-scale are often referred to as \emph{fluid scale} and \emph{diffusion scale}, respectively.
In particular, due to the $\ell_1$-tightness of the scaled
occupancy processes as stated in Proposition~\ref{prop:ergod},
we obtain that for any $N$-optimal graph sequence $\{G_N\}_{N\geq 1}$,
\begin{equation}
q_1(G_N,\infty)\to \lambda \quad \mbox{and} \quad
q_i(G_N,\infty)\to 0 \ \mbox{ for all } i = 2, \dots, b,
\end{equation}
as  $N\to\infty$,
implying that the stationary fraction of servers with queue length
two or larger and the mean waiting time vanish.



\section{Sufficient criteria for asymptotic optimality}\label{sec:det}

In this section we develop a criterion for asymptotic optimality
of an arbitrary deterministic graph sequence on different scales.
In Section~\ref{sec:random} this criterion will be leveraged to establish optimality
of a sequence of random graphs.

We start by introducing some useful notation, and two measures
of \emph{well-connectedness}.
Let $G=(V,E)$ be any graph.
For a subset $U\subseteq V$, define $\com(U) := |V\setminus N[U]|$
to be the set of all vertices that are disjoint from $U$,
where $N[U]:=U\cup \{v\in V:\ \exists\ u\in U\mbox{ with }(u,v)\in E\}$.
For any fixed $\varepsilon>0$ define
\begin{equation}\label{def:dis}
\begin{split}
\dis_1(G,\varepsilon) &:= \sup_{U\subseteq V, |U|\geq \varepsilon |V|}\com(U),\\
\dis_2(G,\varepsilon) &:= \sup_{U\subseteq V, |U|\geq \varepsilon \sqrt{|V|}}\com(U).
\end{split}
\end{equation}

The next theorem provides sufficient conditions for asymptotic
optimality on $N$-scale and $\sqrt{N}$-scale in terms of the above
two well-connectedness measures.

\begin{theorem}\label{th:det-seq}
For any graph sequence $\GG= \{G_N\}_{N\geq 1}$,
\begin{enumerate}[{\normalfont (i)}]
\item $\GG$ is $N$-optimal if for any $\varepsilon>0$, 
$\dis_1(G_N,\varepsilon)/N\to 0$,  as $ N\to\infty.$
\item $\GG$ is $\sqrt{N}$-optimal if for any $\varepsilon>0$, 
$\dis_2(G_N,\varepsilon)/\sqrt{N}\to 0$,  as $ N\to\infty.$
\end{enumerate}
\end{theorem}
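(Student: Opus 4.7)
The plan is to compare the load balancing process on $G_N$ against that on the clique $K_N$ via a coupling and to bound the accumulated discrepancy by the dispersion measures $\dis_1,\dis_2$, thereby transporting the known limits \eqref{eq:fluid-conv} and \eqref{eq:diff-conv} from $K_N$ to $G_N$. Following the sloppy-JSQ philosophy of \cite{MBLW16-3,MBLW16-4}, I would view JSQ on $G_N$ as a \emph{sloppy} version of JSQ on $K_N$ in which a vanishing fraction of arrivals are routed to a server that is not of globally minimum queue length. A sloppy-JSQ analog of the clique limit theorem should then yield the claimed optimality, once the sloppiness rate is controlled in terms of $\dis_k(G_N,\varepsilon)$.

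For the coupling itself, I would equip each vertex $i \in [N]$ with common rate-$\lambda$ Poisson arrival clocks and common service clocks, used in both the $G_N$ and $K_N$ systems. The key observation is the following: let $U_t \subseteq V$ denote the set of servers whose queue length in $K_N$ equals the global minimum at time $t$. If $|U_t|\geq \varepsilon N$ (fluid scale) or $|U_t|\geq \varepsilon\sqrt{N}$ (diffusion scale), then by the very definition of $\dis_k$, at most $\dis_k(G_N,\varepsilon)$ vertices $v$ satisfy $N[v]\cap U_t=\emptyset$. Any arrival on $G_N$ triggered at a vertex outside this exceptional set is routed to an element of $U_t$, i.e., to a server of globally minimum queue length, exactly as it would be on $K_N$. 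Consequently, the only \emph{mismatched} arrivals are those landing at the at most $\dis_k(G_N,\varepsilon)$ exceptional vertices; these occur at total rate $\lambda\cdot \dis_k(G_N,\varepsilon)$, so the cumulative number of mismatches over $[0,T]$ is stochastically dominated by a Poisson random variable of mean $\lambda T\cdot \dis_k(G_N,\varepsilon)$, which is $o(N)$ (resp.\ $o(\sqrt{N})$) by hypothesis.

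To close the argument, I would verify that $U_t$ really is large with high probability throughout $[0,T]$. On $N$-scale, since $q_1(K_N,\cdot)\to \lambda$ and $q_i(K_N,\cdot)\to 0$ for $i\geq 2$, the fraction of empty servers stays close to $1-\lambda$, so any $\varepsilon<1-\lambda$ works; on $\sqrt{N}$-scale the heavy-traffic description \eqref{eq:diff-conv} leaves $\Theta(\sqrt{N})$ empty servers, matching the $\varepsilon\sqrt{N}$ threshold used by $\dis_2$. Since each mismatch contributes at most a unit deviation to each $Q_i$, a bootstrapping induction over queue levels $i=1,2,\ldots$, combined with the $\ell_1$-tightness supplied by Proposition~\ref{prop:ergod}, would upgrade the coordinate-wise estimates into the required weak convergence of $(q_i(G_N,\cdot))_{i\geq 1}$ (resp.\ $(\bQ_i(G_N,\cdot))_{i\geq 1}$) to the clique limit.

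The main obstacle, I expect, is the diffusion-scale statement: the centered coordinate $\bQ_1(G_N,\cdot)$ itself fluctuates on scale $\sqrt{N}$, so the cumulative mismatch on $[0,T]$ must be controlled at the level $o(\sqrt{N})$ uniformly in $t$, and the ``large-$U_t$'' condition must be sustained dynamically rather than only in stationarity. This likely requires a two-sided sandwich between an upper and a lower sloppy version of JSQ on $K_N$, together with the full strength of the diffusion results of \cite{EG15}. The $N$-scale statement should follow along analogous but technically lighter lines, using continuous-mapping-style fluid arguments on the occupancy process.
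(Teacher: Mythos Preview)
Your proposal has a genuine circularity gap. You define $U_t$ as the set of servers with globally minimum queue length \emph{in $K_N$}, and then assert that an arrival in $G_N$ at a vertex $v$ with $N[v]\cap U_t\neq\emptyset$ is ``routed to an element of $U_t$, i.e., to a server of globally minimum queue length, exactly as it would be on $K_N$.'' But routing in $G_N$ is determined by the queue lengths \emph{in $G_N$}, not in $K_N$. The two state vectors coincide only before the first mismatch; once a single discrepancy occurs, your set $U_t$ (computed from $K_N$) need not contain any $G_N$-minimum, and the claim fails. You cannot bound the mismatches by $\dis_k$ applied to a set whose relevance to $G_N$'s routing you have not established. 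The ``bootstrapping induction over queue levels'' you allude to does not resolve this, because the very identification of $U_t$ with the set of shortest queues in both systems is what would need to be bootstrapped.

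The paper avoids this circularity by never coupling $G_N$ directly with $K_N$. Instead it introduces an intermediate non-topological scheme $I(G_N,n)$ that (a) belongs to the class $\CJSQ(n)$ of policies that always route to one of the $n{+}1$ shortest queues, and (b) is coupled to $G_N$ using the set $\cS^N(n)$ of the $n$ shortest queues \emph{in $G_N$ itself}. The mismatch count $\Delta^N$ then measures how often the $G_N$-routed server fails to lie in $\cS^N(n)$; this is a statement purely about $G_N$'s state, and the $\dis_k$ bound applies cleanly because the instantaneous rate of such failures is at most $\lambda\cdot\com(\cS^N(n))$. A separate sandwich argument (Proposition~\ref{prop:cjsq}) shows that any scheme in $\CJSQ(n)$ with $n=o(N)$ (resp.\ $o(\sqrt{N})$) matches the clique limit. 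Your direct $K_N$-coupling skips this decoupling step, and that is where the argument breaks.
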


The next corollary is an immediate consequence
of Theorem~\ref{th:det-seq}.

\begin{corollary}
Let $\GG= \{G_N\}_{N\geq 1}$ be any graph sequence and $d_{\min}(G_N)$ be the minimum degree of $G_N$. Then
{\rm(i)} If $d_{\min}(G_N) = N-o(N)$, then $\GG$ is $N$-optimal, and 
{\rm(ii)} If $d_{\min}(G_N) = N-o(\sqrt{N})$, then $\GG$ is $\sqrt{N}$-optimal.
\end{corollary}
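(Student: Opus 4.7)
The plan is to deduce the corollary directly from Theorem~\ref{th:det-seq} by showing that the minimum-degree hypotheses force the corresponding well-connectedness measures $\dis_1$ and $\dis_2$ to vanish. The key elementary observation is that if every vertex has degree at least $N - f(N)$, then every vertex $v$ has at most $f(N) - 1$ non-neighbors among $V\setminus\{v\}$. Consequently, a vertex $v \notin U$ can belong to $V\setminus N[U]$ only if $U$ is entirely contained in the set of non-neighbors of $v$, which forces $|U| \leq f(N) - 1$.

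For part~(i), set $f(N) := N - d_{\min}(G_N) = o(N)$ and fix $\varepsilon > 0$. Take any $U \subseteq V(G_N)$ with $|U| \geq \varepsilon N$. For all $N$ large enough so that $f(N) - 1 < \varepsilon N$, no vertex $v \notin U$ can have all of $U$ in its non-neighbor set, hence $\com(U) = 0$. Taking the supremum over such $U$ yields $\dis_1(G_N,\varepsilon) = 0$ for all sufficiently large $N$, and in particular $\dis_1(G_N,\varepsilon)/N \to 0$. Applying Theorem~\ref{th:det-seq}(i) gives $N$-optimality.

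For part~(ii), set $f(N) := N - d_{\min}(G_N) = o(\sqrt{N})$ and fix $\varepsilon > 0$. For any $U$ with $|U| \geq \varepsilon \sqrt{N}$, the same argument shows that whenever $f(N) - 1 < \varepsilon \sqrt{N}$ (which holds for all large $N$ by hypothesis), every $v \notin U$ has at least one neighbor in $U$, so $\com(U) = 0$. Hence $\dis_2(G_N,\varepsilon) = 0$ eventually, and Theorem~\ref{th:det-seq}(ii) yields $\sqrt{N}$-optimality.

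There is essentially no obstacle here: the entire content of the corollary is the double-counting argument bounding non-neighbors, combined with the already-established Theorem~\ref{th:det-seq}. The only point worth stating carefully is the uniformity in $U$, but this is immediate because the bound $|U| \leq f(N) - 1$ on any $U$ admitting a non-dominated vertex is independent of which $U$ is chosen. No additional probabilistic or coupling machinery is needed beyond what Theorem~\ref{th:det-seq} already encapsulates.
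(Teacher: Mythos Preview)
Your argument is correct and is exactly the intended verification: the paper states the corollary as ``an immediate consequence of Theorem~\ref{th:det-seq}'' without giving a proof, and your non-neighbor counting argument is precisely how one checks that the conditions $\dis_1(G_N,\varepsilon)/N\to 0$ and $\dis_2(G_N,\varepsilon)/\sqrt{N}\to 0$ follow from the minimum-degree hypotheses.
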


The rest of the section is devoted to a discussion of the main
proof arguments for Theorem~\ref{th:det-seq}, focusing on the
proof of $N$-optimality. 
The proof of $\sqrt{N}$-optimality follows along similar lines.
We establish in Proposition~\ref{prop:cjsq} that if a system is able
to assign each task to a server in the set $\cS^N(n(N))$ of the
$n(N)$ nodes with shortest queues (ties broken arbitrarily), where $n(N)$ is $o(N)$,
then it is $N$-optimal. 
Since the underlying graph is not a clique however (otherwise there
is nothing to prove), for any $n(N)$ not every arriving task can be
assigned to a server in $\cS^N(n(N))$.
Hence we further prove in Proposition~\ref{prop:stoch-ord-new}
a stochastic comparison property implying that if on any finite
time interval of length~$t$, the number of tasks $\Delta^N(t)$
that are not assigned to a server in $\cS^N(n(N))$ is $o_P(N)$,
then the system is $N$-optimal as well.
The $N$-optimality can then be concluded when $\Delta^N(t)$ is
$o_P(N)$, which we establish in Proposition~\ref{prop:dis-new}
under the condition that $\dis_1(G_N,\varepsilon)/N\to 0$ as
$N \to \infty$ as stated in Theorem~\ref{th:det-seq}.

To further explain the idea described in the above proof outline,
it is useful to adopt a slightly different point of view towards
load balancing processes on graphs.
From a high level, a load balancing process can be thought of as follows:
there are $N$ servers, which are assigned incoming tasks by some scheme.
The assignment scheme can arise from some topological structure as considered in this paper, in which case we will call it \emph{topological load balancing}, or it can arise from some other property of the occupancy process, in which case we will call it \emph{non-topological load balancing}.
As mentioned earlier, under Markovian assumptions, the JSQ policy or the clique is optimal among the set of all non-anticipating schemes, irrespective of being topological or non-topological.
Also, load balancing on graph topologies other than a clique can be thought of as a `sloppy' version of that on a clique, when each server only has access to partial information on the occupancy state.
Below we first introduce a different type of sloppiness in the task assignment scheme, and show that under a limited amount of sloppiness optimality is retained on a suitable scale.
Next we will construct a scheme which is a hybrid of topological and non-topological schemes, whose behavior is simultaneously close to both the load balancing process on a suitable graph and that on a clique.

\vspace{.25cm}
\noindent
{\bf A class of sloppy load balancing schemes.}
Fix some function $n:\N\to\N$, and recall the set $\cS^N(n(N))$ as before.
Consider the class $\CJSQ(n(N))$ where each arriving task is assigned
to one of the servers in $\cS^N(n(N))$. 
It should be emphasized that for any scheme in $\CJSQ(n(N))$, we are not imposing any restrictions on how the ties are broken to select the specific set $\cS^N(n(N))$, or how the incoming task should be assigned to a server in $\cS^N(n(N))$.
The scheme only needs to ensure that the arriving task is assigned to \emph{some} server in $\cS^N(n(N))$ with respect to \emph{some} tie breaking mechanism.
The next proposition provides a sufficient criterion for asymptotic
optimality of any scheme in $\CJSQ(n(N))$.

\begin{proposition}[$\bigstar$]\label{prop:cjsq}
For $0\leq n(N)<N$, let $\Pi\in\CJSQ(n(N))$ be any scheme. {\rm(i)} If $n(N)/N\to 0$ as $N\to\infty,$ then $\Pi$ is $N$-optimal, and {\rm(ii)} If $n(N)/\sqrt{N}\to 0$ as $N\to\infty,$ then $\Pi$ is $\sqrt{N}$-optimal.
\end{proposition}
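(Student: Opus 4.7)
The plan is to construct a coupling between $\Pi$ and the $\JSQ$-process on the clique $K_N$ via the $S$-coupling of Appendix~\ref{app:stoch}, so that both processes share arrival epochs and rank-indexed service completions. I would then show that on the relevant scale, the two scaled occupancy processes coincide pathwise on a high-probability event, at which point the limits for JSQ established in~\eqref{eq:fluid-conv} and~\eqref{eq:diff-conv} transfer directly to $\Pi$, yielding the stated optimality.

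The crux is a uniform-in-time lower bound on the number of empty servers in the JSQ-process. For part~(i), the fluid limit~\eqref{eq:fluid-conv} ensures that the empty-fraction $1-q_1^{\JSQ}(t)$ is bounded away from $0$ on any compact interval $[0,T]$ (provided the initial state converges to one with $q_1(0)<1$), so the number of empty servers is of order~$N$. Since $n(N)/N\to 0$, with high probability JSQ carries at least $n(N)+1$ empty servers throughout $[0,T]$. For part~(ii), the Halfin--Whitt diffusion limit~\eqref{eq:diff-conv} yields $N-Q_1^{\JSQ}(t)=\Op(\sqrt{N})$ together with a matching lower bound via tightness of $\bQ_1^{\JSQ}$ on $[0,T]$, so the number of empties exceeds $c\sqrt{N}$ uniformly on $[0,T]$ with high probability for some $c>0$, dominating $n(N)=\op(\sqrt{N})$.

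Conditional on the ``many empties'' event, I would choose the tie-breaking rules in $\Pi$ and JSQ to agree---say, both route each arrival to the empty server with the smallest label among the $n(N)$-shortest set. On this event the $n(N)$-shortest set in the JSQ-process consists entirely of empty servers, and by the inductive identity of the two configurations it coincides with the $n(N)$-shortest set in $\Pi$, so both processes increment the same server. Under the $S$-coupling, departures also act on identical servers. By induction over jump epochs the $\Pi$- and JSQ-occupancy vectors coincide pathwise on $[0,T]$ on the good event, and weak convergence of $\Pi$ to the same fluid (resp.\ diffusion) limit follows by a standard triangle-inequality argument in Skorohod space.

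The main obstacle, I expect, is to make the ``enough empties uniformly on $[0,T]$'' estimate precise at the sharper diffusion scale. For the fluid scale it follows from the smoothness of the limiting ODE and standard tightness, but for the diffusion scale the scaling $n(N)=\op(\sqrt{N})$ requires a correspondingly sharper uniform-in-$t$ lower tail bound for $\bQ_1^{\JSQ}$ on $[0,T]$, which must be extracted from the proof of~\eqref{eq:diff-conv} in~\cite{EG15}. Additional care is needed to propagate the control across all coordinates $(q_i)_{i\geq 1}$ in the $\ell_1$ topology, where Proposition~\ref{prop:ergod} and Lemma~\ref{lem:tightcond} play an essential role.
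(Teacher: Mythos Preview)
Your pathwise-coincidence strategy has a genuine gap at the diffusion scale, and the obstacle you flag is not merely hard but insurmountable: the claim that the JSQ process carries at least $c\sqrt{N}$ empty servers uniformly on $[0,T]$ with high probability is false. The limiting coordinate $\bar Q_1$ in~\eqref{eq:diff-conv} is a diffusion reflected at~$0$ (with a nontrivial local-time term in the Eschenfeldt--Gamarnik limit), and with positive probability it hits~$0$ on $[0,T]$; at such times $N-Q_1^{\JSQ}=o(\sqrt{N})$. No tightness argument for $\bar Q_1^{\JSQ}$ can produce a uniform-in-$t$ lower bound bounded away from zero. Once the number of servers at the current minimum queue length drops below $n(N)+1$, a generic $\Pi\in\CJSQ(n(N))$ may route to a server one level above the minimum while JSQ routes to the minimum, and your induction on pathwise identity of the occupancy vectors breaks at that epoch. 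Nor does it help to bound only the ``bad'' arrival epochs where $0<N-Q_1\leq n(N)$: the limiting occupation time of the band $\{-n(N)/\sqrt{N}<\bar Q_1<0\}$ on $[0,T]$ is of order $n(N)/\sqrt{N}$, so the number of arrivals in that window is of order $\sqrt{N}\,n(N)$, which is $o(N)$ but not $o(\sqrt{N})$. Even for part~(i) your argument requires the extraneous hypothesis $q_1(0)<1$.

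The paper's route is different and sidesteps this obstruction. Rather than attempting pathwise equality, it uses the $S$-coupling of Appendix~\ref{app:stoch} to sandwich any $\Pi\in\CJSQ(n(N))$ between JSQ and the scheme $\MJSQ(n(N))$ that always routes to the $(n(N){+}1)$-th ordered server, in the tail-sum order $\sum_{i\geq m}Q_i$ for every $m\geq 1$. The upper extreme $\MJSQ(n(N))$ is then shown to have the same fluid and diffusion limits as JSQ---essentially because it behaves like ordinary JSQ on a system of $N-n(N)$ servers---so the sandwich closes on the required scale. The key advantage is that this comparison is deterministic and uniform in~$t$, with no hypothesis whatsoever on the number of empty or minimum-queue servers.
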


\vspace{.25cm}
\noindent
{\bf A bridge between topological and non-topological load balancing.}
For any graph $G_N$ and $n \leq N$, we first construct a scheme called $I(G_N, n)$, which is an intermediate blend between the topological load balancing process on $G_N$ and some kind of non-topological load balancing on $N$ servers.
The choice of $n = n(N)$ will be clear from the context.

To describe the scheme $I(G_N,n)$, first synchronize the arrival
epochs at server~$v$ in both systems, $v = 1,2,\ldots,N$.
Further, the servers in both systems are arranged in  non-decreasing order of the queue lengths, and  the departure epochs at the $k$-th ordered
server in the two systems are synchronized, 
$k = 1,2,\ldots,N$.
When a task arrives at server~$v$ at time~$t$ say, it is assigned
in the graph $G_N$ to a server $v' \in N[v]$ according to its own
statistical law.
For the assignment under the scheme $I(G_N,n)$, first observe that if
\begin{equation}\label{eq:criteria}
\min_{u\in N[v]}X_u(G_N,t) \leq \max_{u\in \cS(n)}X_u(G_N,t),
\end{equation}
then there exists \emph{some} tie-breaking mechanism for which
$v' \in N[v]$ belongs to $\cS(n)$ under $G_N$.
Pick such an ordering of the servers, and assume that $v'$ is the
$k$-th ordered server in that ordering, for some $k \leq n+1$.
Under $I(G_N,n)$ assign the arriving task to the $k$-th ordered
server (breaking ties arbitrarily in this case).
Otherwise, if \eqref{eq:criteria} does not hold, then the task is
assigned to one of the $n+1$ servers with minimum queue lengths
under $G_N$ uniformly at random.

Denote by $\Delta^N(I(G_N,n),T)$ the cumulative number of arriving tasks
up to time $T \geq 0$ for which Equation~\eqref{eq:criteria} is
violated under the above coupling.
The next proposition shows that the load balancing process under
the scheme $I(G_N,n)$ is close to that on the graph $G_N$ in terms
of the random variable $\Delta^N(I(G_N,n),T)$.

\begin{proposition}[$\bigstar$]\label{prop:stoch-ord-new}
The following inequality is preserved almost surely
\begin{equation}\label{eq:stoch-ord-new}
\sum_{i=1}^b |Q_i(G_N,t)-Q_i(I(G_N, n),t)|\leq 2\Delta^N(I(G_N,n),t)\ \forall\ t\geq 0,
\end{equation}
provided the two systems start from the same occupancy state at $t=0$.
\end{proposition}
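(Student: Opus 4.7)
The plan is to reduce the claim to a statement about the sorted-$\ell_1$ (equivalently, Wasserstein-1) distance between the empirical queue-length measures of the two systems. The starting observation is the identity
\begin{equation*}
\sum_{i=1}^b \bigl|Q_i(G_N,t) - Q_i(I(G_N,n),t)\bigr| \;=\; \sum_{k=1}^N \bigl|X_{(k)}(G_N,t) - X_{(k)}(I(G_N,n),t)\bigr|,
\end{equation*}
which follows by writing each $Q_i$ as $\sum_k \mathbbm{1}[X_{(k)}\geq i]$, then invoking Fubini and using the monotonicity in $k$ of the indicators on sorted sequences to interchange the absolute value with the summation. Denote either side by $D(t)$. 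Since $D(0)=0$ and $\Delta^N(I(G_N,n),0)=0$ by hypothesis, it suffices to show that $D$ increases by at most $2$ at each bad arrival and does not increase at any other event.

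Under the coupling, the events split into three types: (a) good arrivals, where criterion~\eqref{eq:criteria} holds and both systems increment the $k^\ast$-th order statistic for the same $k^\ast\leq n+1$ (by the construction of $I(G_N,n)$); (b) synchronized departures at some rank $k$, where the rank-$k$ server of each system loses a task provided its queue is nonempty; and (c) bad arrivals, where $G_N$ raises the queue of some server with length $m_v>X_{(n+1)}(G_N,t^-)$ while $I(G_N,n)$ raises the queue of a server chosen among the $n+1$ shortest under $G_N$. I would bound the increment of $D$ at each type and combine.

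The core tool is a transport-plan argument. At any time $t$, the pairing of the $k$-th order statistic of $G_N$ with the $k$-th order statistic of $I(G_N,n)$ realises the cost $D(t)$. For events of type (a), this same pairing (with the two $k^\ast$-th masses moving together by $+1$) remains a valid plan after the event and has unchanged cost, so $D$ cannot increase. For type (b), the argument is identical when both rank-$k$ queues are positive, and when only one of them is positive the paired mass at the strictly larger value moves one integer step towards its partner, so the plan cost strictly decreases. For type (c), each empirical measure moves one unit of mass by one integer step to some (possibly different) position, and the triangle inequality for Wasserstein-1 distance gives an increment of at most $1+1=2$. Summing across events yields $D(t)\leq 2\Delta^N(I(G_N,n),t)$.

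The main subtlety will be making the \emph{same pairing} argument rigorous in the presence of ties in the discrete queue-length multisets and of empty queues during departures; neither is a conceptual difficulty, but each requires a careful tie-breaking convention and bookkeeping between physical servers and their ranks in each system.
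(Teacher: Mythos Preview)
Your argument is correct. The identity $\sum_i |Q_i^A - Q_i^B| = \sum_k |X_{(k)}^A - X_{(k)}^B|$ is the standard equivalence between the tail-sum distance and the sorted-$\ell_1$ (Wasserstein-$1$) distance for empirical measures on $\mathbb{Z}$, and your event-by-event transport-plan analysis handles all three cases: for good arrivals and synchronized rank-$k$ departures the monotone pairing remains a valid plan with unchanged or decreased cost, while for bad arrivals the triangle inequality for $W_1$ gives the increment bound of~$2$. One case you do not mention explicitly is blocking when $b<\infty$: if at a good arrival one system has $X_{(k^\ast)}=b$ and the other has $X_{(k^\ast)}<b$, only the latter increments, but it necessarily moves \emph{towards}~$b$, so the pair cost still cannot increase. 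So the finite-buffer boundary causes no trouble either.

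The paper defers its own proof to an appendix not included in the supplied source, so a line-by-line comparison is not possible. Based on the coupling constructions in the paper and in the cited works \cite{MBLW16-3,MBLW16-4} on which this proposition is explicitly modeled, the expected route is to track $\sum_i |Q_i^A - Q_i^B|$ directly: each arrival or departure changes exactly one $Q_i$ in each system, and one argues case-by-case on the signs of the affected differences $Q_i^A - Q_i^B$ to show the sum does not increase at good events and increases by at most~$2$ at bad ones. Your Wasserstein reformulation is mathematically equivalent but more economical: instead of sign bookkeeping on the $Q_i$'s, you bound the post-event optimal cost by the cost of the pre-event monotone plan with one or two unit masses shifted by one step. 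This buys a uniform treatment of ties and of the empty-queue boundary at the modest price of invoking the sorted-$\ell_1$ identity once at the outset.
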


In order to conclude optimality on $N$-scale or $\sqrt{N}$-scale,
it remains to be shown that for any $T>0$, $\Delta^N(I(G_N,n),T)$ is sufficiently small.
The next proposition provides suitable asymptotic bounds for
$\Delta^N(I(G_N,n),T)$ under the conditions on $\dis_1(G_N,\varepsilon)$
and $\dis_2(G_N,\varepsilon)$ stated in Theorem~\ref{th:det-seq}.

\begin{proposition}\label{prop:dis-new}
For any $\varepsilon, T>0$ the following holds.
\begin{enumerate}[{\normalfont (i)}]
\item There exist $\varepsilon'>0$ and $n_{\varepsilon'}(N)$ with $n_{\varepsilon'}(N)/N\to 0$ as $N\to\infty$, such that if $\dis_1(G_N,\varepsilon')/N\to 0$ as $N\to\infty$, then  
\[\Pro{\Delta^N(I(G_N,n_{\varepsilon'}),T)/N>\varepsilon}\to 0.\] 
\item There exist $\varepsilon'>0$ and $m_{\varepsilon'}(N)$ with $m_{\varepsilon'}(N)/\sqrt{N}\to 0$ as $N\to\infty$, such that if $\dis_2(G_N,\varepsilon')/\sqrt{N}\to 0$ as $N\to\infty$, then  
\[\Pro{\Delta^N(I(G_N,m_{\varepsilon'}),T)/\sqrt{N}>\varepsilon}\to 0.\]
\end{enumerate}
\end{proposition}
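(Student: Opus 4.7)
The plan is to bound $\mathbb{E}[\Delta^N(I(G_N, n), T)]$ via Campbell's formula for the marked Poisson arrival process, control the integrand using a purely graph-theoretic bound on the number of violating vertices at each time, and then transfer to the desired probability statement by Markov's inequality together with a stopping-time bootstrap. The bootstrap is needed because the graph-side bound requires a lower bound on the number of short-queue servers in $G_N$, which is itself a consequence of the $N$- or $\sqrt{N}$-optimality that one is ultimately trying to establish.

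The key geometric observation is that, since the $n$-th smallest queue length $M(t) := \max_{u \in \cS(n)} X_u(G_N, t)$ is always non-negative, every violator $v$ at time $t$---i.e.\ every $v$ with $\min_{u \in N[v]} X_u(G_N, t) > M(t)$---must satisfy $N[v] \cap \cS_0(G_N, t) = \emptyset$, where $\cS_0(G_N, t)$ denotes the set of empty servers. Therefore the number of violators at time $t$ is at most $\com(\cS_0(G_N, t))$, and Campbell's formula yields
\begin{equation*}
\mathbb{E}[\Delta^N(I(G_N, n), T)] \;\leq\; \frac{\lambda(N)}{N} \int_0^T \mathbb{E}[\com(\cS_0(G_N, t))] \, dt.
\end{equation*}

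For part~(i), set $\varepsilon' := (1-\lambda)/4$ and $\eta := (1-\lambda)/8$, pick any $n_{\varepsilon'}(N) \to \infty$ with $n_{\varepsilon'}(N)/N \to 0$, and introduce the stopping time $\tau := \inf\{t \geq 0 : \Delta^N(I(G_N, n_{\varepsilon'}), t) > \eta N\}$. On $\{t < \tau\}$, Proposition~\ref{prop:stoch-ord-new} gives $|\cS_0(G_N, t)| \geq |\cS_0(I(G_N, n_{\varepsilon'}), t)| - 2\eta N$, while Proposition~\ref{prop:cjsq}(i) applied to $I(G_N, n_{\varepsilon'}) \in \CJSQ(n_{\varepsilon'})$ together with~\eqref{eq:clique-stn} implies $|\cS_0(I(G_N, n_{\varepsilon'}), t)| \geq (1-\lambda - o(1))N$ uniformly on $[0, T]$ with probability $1-o(1)$. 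Consequently, on $\{t < \tau\}$ we have $|\cS_0(G_N, t)| \geq \varepsilon' N$ with high probability, so the hypothesis $\dis_1(G_N, \varepsilon')/N \to 0$ forces $\com(\cS_0(G_N, t)) \leq \dis_1(G_N, \varepsilon') = o(N)$. Combining with the Campbell bound and Markov's inequality on the stopped process yields $\mathbb{E}[\Delta^N(T\wedge\tau)] = o(N)$, and hence $\Pro{\tau \leq T} \to 0$ via the inclusion $\{\tau \leq T\} \subseteq \{\Delta^N(T\wedge\tau) > \eta N\}$, which in turn gives $\Pro{\Delta^N(T)/N > \varepsilon} \to 0$ for every $\varepsilon \in (0, \eta]$.

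Part~(ii) follows the same stopping-time blueprint at the $\sqrt{N}$ scale using Proposition~\ref{prop:cjsq}(ii). The main obstacle is that the limiting Halfin--Whitt diffusion for $|\cS_0(K_N, \cdot)|/\sqrt{N}$ on a clique is reflected at $0$, so a uniform-in-$t$ lower bound $|\cS_0(I(G_N, m), t)| \geq \varepsilon' \sqrt{N}$ on $[0, T]$ is not available as cleanly as in part~(i). The remedy is to split further by the value of $M(t)$: during ``starved'' times $|\cS_0(G_N, t)| < m_{\varepsilon'}(N)$ one has $M(t) \geq 1$, so the number of violators is bounded by $\com(\cS_1(G_N, t))$, and since $|\cS_1(G_N, t)| = N - Q_2(G_N, t) \gg \sqrt{N}$, the hypothesis together with the monotonicity of $\dis_2(G_N, \cdot)$ in the threshold yields $\com(\cS_1(G_N, t)) \leq \dis_2(G_N, \varepsilon') = o(\sqrt{N})$; the intermediate regime $m_{\varepsilon'}(N) \leq |\cS_0(G_N, t)| < \varepsilon' \sqrt{N}$---whose pointwise probability is small via the local density of the limit diffusion near $0$---is the most delicate part, and is handled by choosing $\varepsilon'$ sufficiently small and exploiting the Poissonian fluctuation structure of $\Delta^N$ to close the estimate at $o(\sqrt{N})$ within the stopping-time argument.
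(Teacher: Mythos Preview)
Your detour through the set $\cS_0(G_N,t)$ of empty servers is unnecessary and creates real gaps. The violation of~\eqref{eq:criteria} at an arrival to~$v$ means precisely that $N[v]\cap\cS(n)=\emptyset$, i.e.\ $v\in\com(\cS(n))$, not merely $v\in\com(\cS_0)$. The crucial point is that $|\cS(n)|$ equals $n$ (or $n{+}1$) \emph{deterministically}, independent of the queue state, so the instantaneous violation rate is at most
\[
\lambda\,\com(\cS(n,t))\;\le\;\lambda\sup_{U\subseteq V_N,\,|U|\ge n}\com(U),
\]
a purely graph-theoretic quantity. A one-line duality (if $|U|\ge n$ and $\com(U)>\varepsilon'N$, then $W:=V_N\setminus N[U]$ has $|W|>\varepsilon'N$ and $\com(W)\ge|U|\ge n$, whence $\dis_1(G_N,\varepsilon')\ge n$) shows that taking $n_{\varepsilon'}(N)$ just above $\dis_1(G_N,\varepsilon')$ forces this supremum to be at most $\varepsilon'N$. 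With $\varepsilon':=\varepsilon/(2\lambda T)$ one obtains $\Delta^N(T)$ stochastically dominated by a Poisson$(\varepsilon N/2)$ variable, and the conclusion follows at once. The identical argument works for part~(ii) with $N$ replaced by $\sqrt{N}$: no bootstrap, no appeal to the queue dynamics or the initial state, and none of the case analysis you describe.

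By contrast, routing through $\cS_0$ forces you to lower-bound $|\cS_0(G_N,t)|$, which genuinely depends on the dynamics. Your claim that $|\cS_0(I(G_N,n_{\varepsilon'}),t)|\ge(1-\lambda-o(1))N$ uniformly on $[0,T]$ is not justified: you invoke the steady-state limit~\eqref{eq:clique-stn}, but Proposition~\ref{prop:cjsq} only gives process-level convergence to the fluid trajectory $q_1(t)$, which need not stay below $\lambda$ on $[0,T]$ for general initial occupancies (take $q_1(0)$ close to~$1$). Part~(ii) is left essentially unproved --- the ``intermediate regime'' is declared ``the most delicate part'' and dispatched with a vague appeal to ``the Poissonian fluctuation structure''. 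Both difficulties disappear once you bound violators by $\com(\cS(n))$ instead of $\com(\cS_0)$.
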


The proof of Theorem~\ref{th:det-seq} then readily follows
by combining Propositions~\ref{prop:cjsq}-\ref{prop:dis-new}
and observing that the scheme $I(G_N,n)$ belongs to the class
$\CJSQ(n)$ by construction.

\begin{proof}[Proof of Proposition~\ref{prop:dis-new}]
Fix any $\varepsilon, T>0$ and choose
$\varepsilon' = \varepsilon/(2 \lambda T)$.
With the coupling described above, when a task arrives at some
vertex~$v$ say, Equation~\eqref{eq:criteria} is violated only if none
of the vertices in $\cS(n_{\varepsilon'}(N))$ is a neighbor of~$v$.
Thus, the total instantaneous rate at which this happens is
\[
\lambda \com(\cS(n_{\varepsilon'}(N),t)) \leq
\lambda \sup_{U \subseteq V_N, |U| \geq n_{\varepsilon'}(N)} \com(U),
\]
irrespective of what this set $\cS^N(n(N))$ actually is.
Therefore, for any fixed $T \geq 0$,
\[
\Delta^N(I(G_N,n_{\varepsilon'}),T) \leq
A\Big(\lambda \sup_{U \subseteq V_N, |U| \geq n_{\varepsilon'}(N)} \com(U)\Big),
\]
where $A(\cdot)$ represents a unit-rate Poisson process.
This can then be leveraged to show that
$\Delta^N(I(G_N,n_{\varepsilon'}),T)$ is small on an $N$-scale
and $\sqrt{N}$-scale, respectively, under the conditions stated
in the proposition, by choosing a suitable $n_{\varepsilon'}$.

Specifically, if $\dis_1(G_N,\varepsilon')/N \to 0$, then there exists
$n_{\varepsilon'}(N)$ with $n_{\varepsilon'}(N)/N \to 0$ such that 
$\dis_1(G_N,\varepsilon') \leq n_{\varepsilon'}(N)$ for all $N \geq 1$,
and hence
$\sup_{U \subseteq V_N, |U| \geq n_{\varepsilon'}(N)} \com(U) \leq
\varepsilon' N$.
It then follows that with high probability,
\[
\limsup_{N \to\infty} \frac{1}{N} \Delta^N(I(G_N,n_{\varepsilon'}),T) \leq
\limsup_{N \to\infty} \frac{1}{N} A\Big(\lambda T \varepsilon' N\Big) \leq
2 \lambda T \varepsilon' = \varepsilon.
\]

Likewise, if $\dis_2(G_N,\varepsilon')/\sqrt{N} \to 0$, then there exists
$m_{\varepsilon'}(N)$ with $m_{\varepsilon'}(N)/\sqrt{N}\to 0$ such that
$\dis_2(G_N,\varepsilon') \leq m_{\varepsilon'}(N)$ for all $N \geq 1$,
and hence
$\sup_{U \subseteq V_N, |U| \geq m_{\varepsilon'}(N)} \com(U) \leq
\varepsilon' \sqrt{N}$.
It then follows that with high probability,
\begin{align*}
&\limsup_{N\to\infty} \frac{1}{\sqrt{N}} \Delta^N(I(G_N,m_{\varepsilon'}),T) 
\leq
\limsup_{N\to\infty} \frac{1}{\sqrt{N}} A\Big(\lambda T \varepsilon' \sqrt{N}\Big) \leq
2 \lambda T \varepsilon' = \varepsilon.
\end{align*}
\end{proof}

\begin{proof}[Proof of Theorem~\ref{th:det-seq}]

(i) In order to prove the fluid-level optimality of $G_N$, fix any $\varepsilon>0$. 
Observe from Proposition~\ref{prop:stoch-ord-new} and Proposition~\ref{prop:dis-new} (i) that there exists $\varepsilon'>0$ such that with high probability
\begin{align*}
 \sup_{t\in[0,T]}\frac{1}{N}\sum_{i=1}^b|Q_i(G_N,t)-Q_i(I(G_N,n_{\varepsilon'}(N)),t)| 
\leq \frac{2\Delta^N_\varepsilon(T)}{N}\leq \varepsilon.
\end{align*}
Furthermore, since $I(G_N,n_{\varepsilon'}(N))\in\CJSQ(n_{\varepsilon'}(N))$ and $n_{\varepsilon'}(N)/N\to 0$, Proposition~\ref{prop:cjsq} yields
$$\sup_{t\in[0,T]}\sum_{i=1}^b|q_i(I(G_N,n_{\varepsilon'}(N)),t)-q_i(t)|\pto 0\quad \mathrm{as}\quad N\to\infty.$$
Thus since $\varepsilon>0$ is arbitrary, we obtain that with high probability as $N\to\infty$,
$$\sup_{t\in[0,T]}\sum_{i=1}^b|q_i(G_N,t)-q_i(t)|\leq \varepsilon'',$$
for all $\varepsilon''>0$, which completes the proof of Part (i).\\

(ii) To prove the diffusion-level optimality of $G_N$, again fix any $\varepsilon>0$. 
As in Part (i), using Proposition~\ref{prop:stoch-ord-new} and Proposition~\ref{prop:dis-new} (ii), there exists $\varepsilon'>0$
\begin{align*}
\sup_{t\in[0,T]}\frac{1}{\sqrt{N}}\sum_{i=1}^b|Q_i(G_N,t)-Q_i(I(G_N,m_{\varepsilon'}(N)),t)|\leq \frac{\Delta^N_{\varepsilon'}(T)}{\sqrt{N}}\leq \varepsilon.
\end{align*}
Furthermore, since $I(G_N,m_{\varepsilon'}(N))\in\CJSQ(m_{\varepsilon'}(N))$ and $m_{\varepsilon'}(N)/\sqrt{N}\to 0$, Proposition~\ref{prop:cjsq} yields
\begin{align*}
&\big\{(\bQ_1(I(G_N,m_{\varepsilon'}(N)),t), \bQ_2(I(G_N,m_{\varepsilon'}(N)),t),\ldots)\big\}_{t\geq 0}\\
&\hspace{3cm}\dto \big\{(\bQ_1(t), \bQ_2(t),\ldots)\big\}_{t\geq 0},
\end{align*}
as $N\to\infty$, where the process $(\bQ_1(\cdot), \bQ_2(\cdot),\ldots)$ given by~\eqref{eq:diff-conv}.
Since $\varepsilon>0$ is arbitrary, we thus obtain
$$\big\{(\bQ_1(G_N,t), \bQ_2(G_N,t),\ldots)\big\}_{t\geq 0} \dto \big\{(\bQ_1(t), \bQ_2(t),\ldots)\big\}_{t\geq 0},$$
as $N\to\infty$, which completes the proof of Part (ii).
\end{proof}

\section{Necessary criteria for asymptotic optimality}
\label{sec:necessary}
From the conditions of Theorem~\ref{th:det-seq} it follows that if for all $\varepsilon>0$, $\dis_1(G_N,\varepsilon)$ and $\dis_2(G_N,\varepsilon)$ are $o(N)$ and $o(\sqrt{N})$, respectively, then
the total number of edges in $G_N$ must be $\omega(N)$ and $\omega(N\sqrt{N})$, respectively.
Theorem~\ref{th:bdd-deg} below states that the \emph{super-linear} growth rate of the total number of edges is not only sufficient, but also necessary in the sense that any graph with $O(N)$ edges is asymptotically sub-optimal on $N$-scale.

\begin{theorem}
\label{th:bdd-deg}
Let $\GG= \{G_N\}_{N\geq 1}$ be any graph sequence, such that there exists a fixed integer $M<\infty$ with 
\begin{equation}\label{eq:bdd-deg}
\limsup_{N\to\infty}\dfrac{\#\big\{v\in V_N:d_v\leq M\big\}}{N}>0,
\end{equation}
where $d_v$ is the degree of the vertex $v$.
Then $\GG$ is sub-optimal on $N$-scale.
\end{theorem}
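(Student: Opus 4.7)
The plan is to prove the contrapositive: assuming the bounded-degree condition \eqref{eq:bdd-deg}, I will exhibit a $\lambda < 1$ for which $q_2(G_N,\infty)$ does not converge to $0$ in probability as $N\to\infty$, thereby violating $N$-optimality via the $\ell_1$-tightness in Proposition~\ref{prop:ergod}. By passing to a subsequence, I may assume the bounded-degree set $V_M(N) := \{v \in V_N : d_v \leq M\}$ satisfies $|V_M(N)| \geq \alpha N$ for every $N$, where $\alpha > 0$ is the limsup in \eqref{eq:bdd-deg}. The argument is carried out in stationarity, with $\lambda < 1$ eventually chosen close to $1$.

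The central ingredient is a local stochastic dominance. For each $v \in V_M(N)$, define the neighborhood load $Y_v(t) := \sum_{u \in N[v]} X_u(G_N,t)$. Arrivals directly at $v$ inject load into $N[v]$ at rate $\lambda$ and never leave it, while the total service rate out of $N[v]$ is bounded by $\min(Y_v, M+1)$ because $|N[v]| \leq M+1$. A standard monotone coupling then yields $Y_v(t) \geq \tilde Y_v(t)$ for all $t \geq 0$, where $\tilde Y_v$ is an M/M/$(M+1)$ queue with arrival rate $\lambda$ initialized at $Y_v(0)$. Since $\lambda/(M+1) < 1$, the dominating queue is stable and $\mathbb{P}(\tilde Y_v \geq M+2) \geq c_{\lambda,M} > 0$ for a constant depending only on $\lambda$ and $M$. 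Hence $\mathbb{P}(Y_v(\infty) \geq M+2) \geq c_{\lambda,M}$; by pigeonhole, this forces some $u \in N[v]$ to satisfy $X_u \geq 2$, so $\sum_{u \in N[v]} \mathbb{P}(X_u(G_N,\infty) \geq 2) \geq c_{\lambda,M}$. Summing over $v \in V_M(N)$ and exchanging the order of summation gives
\begin{equation*}
\sum_u \mathbb{P}\bigl(X_u(G_N,\infty) \geq 2\bigr)\,\bigl|N[u] \cap V_M(N)\bigr| \ \geq\ c_{\lambda,M}\,\alpha\, N.
\end{equation*}

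The final step, which I expect to be the principal obstacle, is to convert this degree-weighted lower bound into $\mathbb{E}[Q_2(G_N,\infty)] = \Omega(N)$; once achieved, tightness in Proposition~\ref{prop:ergod} rules out $q_2(G_N,\infty) \pto 0$, contradicting $N$-optimality. The subtlety is that $|N[u] \cap V_M(N)|$ can be as large as $d_u + 1$, so dividing by the maximum degree is too lossy when the graph contains even a few very high-degree vertices, which could in principle serve as heavy-loaded witnesses for many $v \in V_M(N)$ simultaneously. Resolving this requires an additional structural argument: one can exploit that each vertex $u$ has departure rate at most $1$ by rate balance, so a single high-degree $u$ can only be the heavy witness for a limited number of distinct $v$'s per unit time, and taking $\lambda$ close to $1$ makes empty servers across the graph scarce enough that any "absorption" via a small set of high-degree hubs is quantitatively insufficient to prevent $Q_2$ from accumulating mass $\Omega(N)$ on the bounded-degree side. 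Completing this rate-balance step yields $\mathbb{E}[Q_2(G_N,\infty)] \geq c'\,N$ for some positive constant $c' = c'(\lambda,M,\alpha)$, and hence the desired contradiction.
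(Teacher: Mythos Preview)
Your stationary route is legitimate in principle: the paper does state that $N$-optimality forces $q_2(G_N,\infty)\pto 0$, so a lower bound on $\mathbb{E}[q_2(G_N,\infty)]$ would suffice. The M/M/$(M{+}1)$ domination of $Y_v$ is also correct. But the step you flag as the principal obstacle is a genuine gap, and the rate-balance heuristic does not close it. The weighted inequality
\[
\sum_u \mathbb{P}\bigl(X_u(\infty)\geq 2\bigr)\,\bigl|N[u]\cap V_M(N)\bigr|\ \geq\ c_{\lambda,M}\,\alpha N
\]
is entirely consistent with $\mathbb{E}[Q_2]=O(1)$: in a star with one hub joined to $\alpha N$ leaves, the hub alone contributes a term of order $\alpha N$ to the left side, so the bound imposes nothing on the leaves. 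Your sentence ``a single high-degree $u$ can only be the heavy witness for a limited number of distinct $v$'s per unit time'' has no content for a stationary inequality that carries no time parameter, and sending $\lambda\uparrow 1$ does not alter this structure.

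The repair is to abandon the pigeonhole on $Y_v$ and lower-bound $\mathbb{P}(X_v(\infty)\geq 2)$ itself for each $v\in V_M(N)$. Consider the event that, over a window of length~$t$, there are at least $2(M{+}1)$ arrivals at $v$ and no service-completion clock rings in $N[v]$. Every such arrival is routed to a current minimizer inside $N[v]$ and nothing leaves; since filling levels $0$ and $1$ across the at most $M{+}1$ servers in $N[v]$ requires at most $2(M{+}1)$ such increments, this forces $\min_{u\in N[v]}X_u\geq 2$, hence $X_v\geq 2$, regardless of the starting configuration. The event has probability at least $\mathbb{P}(\mathrm{Poisson}(\lambda t)\geq 2(M{+}1))\,\e^{-(M+1)t}>0$ uniformly in~$N$, yielding $\mathbb{E}[q_2(G_N,\infty)]\geq \delta\alpha>0$ and the desired contradiction. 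This local ``burst'' event is exactly the mechanism in the paper's proof; the paper, however, deploys it transiently rather than in stationarity: it starts the system from a state with $q_2(0)=0$, uses the burst event to show $q_2(G_N,t)$ is bounded away from zero for fixed $t>0$, and contradicts the clique fluid trajectory $q_2(\cdot)\equiv 0$ directly, without passing through the steady state or the interchange of limits.
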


\begin{proof}[{Proof of Theorem~\ref{th:bdd-deg}}]
For brevity, denote by $\Xi_N(M)\subseteq V_N$ the set of all vertices with degree at most $M$.
Since $|\Xi_N(M)|/N\leq 1,$ from~\eqref{eq:bdd-deg} we have a convergent subsequence $\big\{\Xi_{N_n}(M)\big\}_{n\geq 1}$ with $\{N_n\}_{n\geq 1}\subseteq \N$, such that $|\Xi_{N_n}(M)|/N\to \xi>0$, as $N\to\infty$.
For the rest of the proof we will consider the asymptotic statements along this subsequence, and hence omit the subscript $n$.

Let the system start from an occupancy state where all the vertices in $\Xi_N(M)$ are empty.
We will show that in finite time, a positive fraction of vertices in $\Xi_N(M)$ will have at least two tasks. 
This will prove the fluid limit sample path cannot agree with that of the sequence of cliques, and hence $\{G_N\}_{N\geq 1}$ cannot be $N$-optimal.
The idea of the proof is as follows: If a graph contains $\Theta(N)$ bounded degree vertices, then starting from all empty servers, in any finite time interval there will be $\Theta(N)$ servers $u$ say, for which all the servers in $N[u]$ have at least one task.
For all such servers an arrival at $u$ must produce a server of queue length two.
Thus, it shows that the instantaneous rate at which servers of queue length two are formed is bounded away from zero, and hence $\Theta(N)$ servers of queue length two are produced in finite time. 

Let $u$ be a vertex with degree $M$ or less in $G_N$.
Consider the event $\cE_N(u,t)$ that at time~$t$ all vertices in $N[u]$ have at least one job.
Note that since $M<\infty$ is fixed, for any $t>0$, $\Pro{\cE_N(u,t)}\geq\delta(t)$ for some $\delta(t)>0$, for all $N\geq 1$.
To see this, note that $\delta(t)$ is the probability that before time $t$ there are $M+1$ arrivals at vertex $u$ and no departure has taken place.
Also observe that for two vertices $u,v\in V_N$ with degrees at most $M$, 
\begin{equation}
\Pro{\cE_N(u,t)\cap \cE_N(v,t)}\geq \delta(t)^2.
\end{equation}
Indeed the probability of the event $\cE_N(u,t)\cap \cE_N(v,t)$ can be lower bounded by the probability of the event that before time $t$ there are $M+1$ arrivals at vertex $u$, $M+1$ arrivals at vertex $v$, and no departure has taken place from $N[u]\cup N[v]$.
Thus, at time $t$, the fraction of vertices in $\Xi_N(M)$ for which all the neighboring vertices have at least one task, is lower bounded by $\delta(t)$.
Now the proof is completed by considering the following: let $u$ be a vertex of degree $M<\infty$ for which all the neighbors have at least one task.
Then at such an instance if a task arrives at server $u$, it must be assigned to a server with queue length one, and hence a server with queue length two will be formed.
Therefore the total scaled instantaneous rate at which the number of queue length two is being formed at time $t$ is at least $\lambda \delta(t)>0$, which also gives the total rate of increase of the fraction of vertices with at least two tasks. 
\end{proof}

\vspace{.25cm}
\noindent
{\bf Worst-case scenario.}
Next we consider the worst-case scenario.
Theorem~\ref{th:min-deg-negative} below asserts that a graph sequence can be sub-optimal for some $\lambda<1$ even when the minimum degree  $d_{\min}(G_N)$ is~$\Theta(N)$.

\begin{theorem}
\label{th:min-deg-negative}
For any $\big\{d(N)\big\}_{N\geq 1}$, such that $d(N)/N\to c$ with $0<c<1/2$, there exists $\lambda<1$, and a graph sequence $\GG= \{G_N\}_{N\geq 1}$ with $d_{\min}(G_N)= d(N)$, such that $\GG$ is sub-optimal on $N$-scale.
\end{theorem}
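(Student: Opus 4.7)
The plan is to prove the theorem by an explicit construction together with a flow-conservation contradiction. For each sequence $\{d(N)\}$ with $d(N)/N \to c \in (0, 1/2)$, I will exhibit a graph sequence $\GG = \{G_N\}$ and a specific $\lambda \in (0,1)$ for which $N$-optimality provably fails. Partition $V_N = A_N \sqcup B_N$ with $|A_N| = d(N)$ and $|B_N| = N - d(N)$; declare $A_N$ a clique, $B_N$ an independent set, and include every edge between $A_N$ and $B_N$. Then each $u \in A_N$ has degree $N-1$ and each $v \in B_N$ has degree $d(N)$, so $d_{\min}(G_N) = d(N)$ as required. The intuition is that every arrival at $v \in B_N$ is confined to $\{v\} \cup A_N$ and therefore overwhelmingly falls into the smaller set $A_N$, which eventually cannot serve it fast enough.

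The value of $\lambda$ is chosen to saturate this bottleneck. Set $\lambda^* := (c + \sqrt{c^2 + 4c})/2$; a short calculation shows that $c/(1-c) < \lambda^* < 1$ precisely because $c < 1/2$, so I fix any $\lambda \in (\lambda^*, 1)$. Arguing by contradiction, assume $\GG$ is $N$-optimal. Then by $\ell_1$-tightness from Proposition~\ref{prop:ergod} together with the definition of $N$-optimality, the stationary occupancy obeys $q_1(G_N, \infty) \pto \lambda$ and $q_i(G_N, \infty) \pto 0$ for all $i \geq 2$. Write $\alpha_N$ and $\beta_N$ for the stationary fractions of non-empty servers in $A_N$ and $B_N$, and pass to a joint subsequential limit $(\alpha, \beta) \in [0,1]^2$; global flow conservation forces $c\alpha + (1-c)\beta = \lambda$.

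The heart of the proof is a flow-balance calculation across the partition. Since each $v \in B_N$ routes its incoming task to $\arg\min$ over $\{v\} \cup A_N$ and $|A_N| = \Theta(N)$, the single slot at $v$ essentially always loses a tie with $A_N$. Case (a): if $\alpha < 1$, then $A_N$ carries $\Theta(N)$ empty servers, so essentially every $B_N$-arrival is routed to $A_N$; the arrival rate into $A_N$ is thus at least $\lambda(1-c)N - o(N)$, whereas its service rate is at most $|A_N| = cN + o(N)$. Flow balance forces $\lambda(1-c) \leq c$, contradicting $\lambda > \lambda^* > c/(1-c)$. Case (b): if $\alpha = 1$, then every $A_N$-vertex has queue length exactly one (because $q_2 \to 0$); a $v$-arrival with $X_v = 0$ is the unique minimum and stays at $v$, while one with $X_v = 1$ is routed to $A_N$ by the tie-break. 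Flow balance for $A_N$ yields $\lambda(1-c)\beta = c$, and combined with $c + (1-c)\beta = \lambda$ this forces $\lambda^2 - c\lambda - c = 0$, whose only positive root is $\lambda^*$; again a contradiction. Both cases being impossible, $\GG$ cannot be $N$-optimal.

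The main obstacle will be to justify the heuristic tie-breaking statements in the genuine steady state, which amounts to exchanging the limits $t \to \infty$ and $N \to \infty$. I plan to handle this by expressing the mean stationary arrival rate into $A_N$ in closed form as $\lambda |A_N| p_A^N + \lambda |B_N| p_B^N$, where $p_A^N$ and $p_B^N$ are stationary routing probabilities that are well-defined by the symmetry within each part, and then passing to the limits of these probabilities along the convergent subsequence of $(\alpha_N, \beta_N)$. The lower-order contribution from ties at a common queue length with the single slot at $v$ can be controlled by a direct moment bound on the size of the tied set, which has cardinality $1 + \Theta((1-\alpha_N)|A_N|)$ in the fluid regime, so that it converges to $1$ when $\alpha<1$ and vanishes when $\alpha=1$, exactly as the heuristic predicts.
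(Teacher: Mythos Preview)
Your argument is sound and reaches the same threshold $\lambda^* = (c+\sqrt{c^2+4c})/2$, but it follows a genuinely different route from the paper. The paper uses the \emph{complete bipartite} graph on $A_N \sqcup B_N$ (no clique on $A_N$) and argues \emph{transiently}: starting from the all-empty state it writes down the fluid ODEs for $q_{1,A}$ and $q_{1,B}$, shows $q_{1,A}$ hits $c$ in finite time when $\lambda > c/(1-c)$, and then argues that the scaled instantaneous rate of creating queue-length-two servers is at least $\lambda(\lambda-c)-c>0$, so the fluid trajectory cannot match the clique's. Your construction adds a clique on $A_N$ (which incidentally nails $d_{\min}(G_N)=d(N)$ exactly), and your contradiction is purely \emph{stationary}: you invoke the paper's assertion that $N$-optimality forces $q_1(G_N,\infty)\to\lambda$, $q_2(G_N,\infty)\to 0$, and derive an impossible flow balance across $A_N$. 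The paper's route is closer to the definition of $N$-optimality and avoids the interchange-of-limits step; yours is shorter and needs no fluid-limit machinery once that step is granted.

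Two points to tighten. First, $(\alpha_N,\beta_N)$ are random, so ``pass to a subsequential limit $(\alpha,\beta)$'' and then split into $\alpha<1$ versus $\alpha=1$ is ambiguous; work instead with the deterministic sequences $\mathbb{E}[\alpha_N],\mathbb{E}[\beta_N]$, which by exchangeability equal $\mathbb{P}(X_u\geq 1)$ and $\mathbb{P}(X_v\geq 1)$. Second, your Case~(b) claim that ``$X_v=0$ is the unique minimum'' is false as stated: $\mathbb{E}[\alpha_N]\to 1$ does not preclude $o(N)$ empty servers in $A_N$, and then the tie is broken toward $A_N$ with probability close to~$1$. Fortunately this only \emph{increases} the flow into $A_N$, so your contradiction survives. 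In fact the case split is unnecessary: since a $B_N$-arrival can stay at $v$ only when $X_v=0$ (up to an $O(1/N)$ tie term at level~$1$), one has $p_B^N \geq \mathbb{E}[\beta_N]-o(1)$ unconditionally. Combining the $A_N$ flow balance $\lambda(1-c)p_B^N + \lambda c\,p_A^N = c\,\mathbb{E}[\alpha_N]+o(1)$ with $\mathbb{E}[\alpha_N]\leq 1$ and the global constraint $c\,\mathbb{E}[\alpha_N]+(1-c)\mathbb{E}[\beta_N]\to\lambda$ yields $\lambda(\lambda-c)\leq c+o(1)$, contradicting $\lambda>\lambda^*$ directly.
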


To construct such a sub-optimal graph sequence, consider a sequence of complete bipartite graphs $G_N = (V_N, E_N)$, with $V_N = A_N \sqcup B_N$ and $|A_N|/N\to c\in (0,1/2)$ as $N\to\infty$.
If this sequence were $N$-optimal, then starting from all empty servers, asymptotically 
the fraction of servers with queue length one would converge to $\lambda$, and the fraction of servers with queue length two or larger should remain zero throughout.
Now note that for large $N$ the rate at which tasks join the empty servers in $A_N$ is given by $(1-c)\lambda$, whereas the rate of empty server generation in $A_N$ is at most $c$.
Choosing $\lambda>c/(1-c)$, one can see that in finite time each server in $A_N$ will have at least one task.
From that time onward with at least instantaneous rate $\lambda(\lambda - c) - c$, servers with queue length two start forming.
The range for $c$ stated in Theorem~\ref{th:min-deg-negative} is only to ensure that there exists $\lambda<1$ with $\lambda(\lambda - c) - c>0$. 
\begin{proof}[{Proof sketch of Theorem~\ref{th:min-deg-negative}}]
Fix a $c>0$.
Construct the graph sequence $\big\{G_N\big\}_{N\geq 1}$ as a sequence of complete bipartite graphs with size of one partite set of the $N$-th graph to be $\lceil c N\rceil$, i.e., $V_N = A_N \sqcup B_N$, such that $|A_N| = \lceil c N\rceil$ and $B_N = V_N\setminus A_N$, and the edge set is given by $E_N = \big\{(u,v): u\in A_N, v\in B_N\big\}$.
Note that $d_{\min}(G_N)/N\to c$, as $N\to\infty$.
We will show that for any $0<c<1/2$, there exists $\lambda$, such that $\GG$ is sub-optimal on $N$-scale.

Assume on the contrary that $\GG$ is $N$-optimal.
Denote by $Q_{i,A}^N(t)$ and $Q_{i,B}^N(t)$ the number of vertices with at least $i$ tasks in partite sets $A_N$ and $B_N$, respectively.
Also define $q_{i,A}^N(t) = Q_{i,A}^N(t)/N$ and $q_{i,B}^N(t) = Q_{i,B}^N(t)/N$.
Assume $q_{2,A}^N(0)=0$, for all $N$.
Observe that as long as $c - q_{1,A}^N>0$ by a non-vanishing margin, any external arrival to servers in $B_N$ will be assigned to an empty server in $A_N$ with probability $1-\text{O}(1/N)$.
Similarly, as long as $1-c - q_{1,B}^N>0$ by a non-vanishing margin, any external arrival to servers in $A_N$ will be assigned to an empty server in $B_N$ with probability $1-\text{O}(1/N)$.
Thus one can show that as $N\to\infty$, until $q_{1,A}^N$ hits $c$, the processes $\big\{q_{1,A}^N(t)\big\}$ and $\big\{q_{2,B}^N(t)\big\}$ converges weakly to a deterministic process described by the following set of ODE's:
\begin{equation}\label{eq:fluid-counter}
\begin{split}
q_{1,A}'(t) &= \lambda (1 - c) - q_{1,A}(t)\\
q_{1,B}'(t) &= \lambda c - q_{1,B}(t).
\end{split}
\end{equation}
Since the total scaled arrival rate into the system of $N$ servers is $\lambda$, should the above system follow the fluid-limit trajectory of the occupancy process for a clique, starting from an all-empty state, $q_{1,A}(t) + q_{1,B}(t)$ must approach $\lambda$ as $t\to\infty$, and $q_{i, A}(t)$ and $q_{i,B}(t)$ both remain 0 for all $t\geq 0$, $i\geq 2$.
When $\lambda>c/(1-c)$,~\eqref{eq:fluid-counter} implies that in finite time $q_{1,A}(t)$ hits $c$.
Consequently, $q_{1,B}(t)$ should approach $\lambda - c$ as $t\to \infty$.
Now we claim that when $q_{1,A}(t) =c$, if a task appears at a server $v$ in $B_N$ that has queue length one, then with probability $1-\text{O}(1/N)$, it will be assigned to a server in $A_N$.
To see this, note that at such an arrival if there is an empty server in $A_N$, then the arriving task is clearly assigned to the idle server, otherwise, when there is no empty server in $A_N$, the arriving task is assigned uniformly at random among the vertices in $N[v]$ having queue length one.
Since there are $\Theta(N)$ vertices in $A_N$ with queue length one, the arriving task with probability $1-O(1/N)$ joins a server in $A_N$.
Therefore, the total scaled rate of tasks arriving at the servers in $A_N$ is at least $\lambda(\lambda-c)$, whereas the total scaled rate at which tasks can leave from servers in $A_N$ is at most $c$.
Thus if $\lambda(\lambda-c)>c$, then in finite time, a positive fraction of servers in $A_N$ will have queue length two or larger.
Now observe that
$$\lambda(\lambda-c)>c\implies \lambda> \frac{c+\sqrt{c^2+4c}}{2},$$
and $(c+\sqrt{c^2+4c})/2<1$ for any $c\in (0,1/2)$. This completes the proof of Theorem~\ref{th:min-deg-negative}.
\end{proof}

\section{Asymptotically optimal random graph topologies}
\label{sec:random}
In this section we use Theorem~\ref{th:det-seq} to investigate how the load balancing process behaves on random graph topologies. 
Specifically, we aim to understand what types of graphs are asymptotically optimal in the presence of randomness (i.e., in the average case scenario).
Theorem~\ref{th:inhom} below establishes sufficient conditions for asymptotic optimality of a sequence of inhomogeneous random graphs.
Recall that a graph $G' = (V',E')$ is called a supergraph of $G=(V,E)$ if $V=V'$ and $E\subseteq E'$.

\begin{theorem}
\label{th:inhom}
Let $\GG= \{G_N\}_{N\geq 1}$ be a graph sequence such that for each $N$, $G_N = (V_N, E_N)$ is a supergraph of the inhomogeneous random graph $G_N'$ where any two vertices $u, v\in V_N$ share an edge with probability $p_{uv}^N$.
\begin{enumerate}[{\normalfont (i)}]
\item If $\inf\ \{p^N_{uv}: u, v\in V_N\}$ is $\omega(1/N)$, then $\GG$ is $N$-optimal.
\item If $\inf\ \{p^N_{uv}: u, v\in V_N\}$ is $\omega(\log(N)/\sqrt{N})$, then $\GG$ is $\sqrt{N}$-optimal.
\end{enumerate}
\end{theorem}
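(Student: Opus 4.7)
}
The idea is to verify the hypotheses of Theorem~\ref{th:det-seq} directly for the inhomogeneous random graph $G_N'$ via a union bound, and then transfer the conclusion to the supergraph $G_N$. The key observation for the transfer is that $\dis_j$ is monotone under edge addition: if $H'$ is a subgraph of $H$ on the same vertex set, then $N_H[U]\supseteq N_{H'}[U]$ for every $U$, whence $\com_H(U)\le \com_{H'}(U)$ and $\dis_j(G_N,\varepsilon)\le \dis_j(G_N',\varepsilon)$ for $j=1,2$. Since the conclusion of Theorem~\ref{th:det-seq} is a weak convergence statement, once we show that the appropriate scaled dispersion of $G_N'$ tends to $0$ in probability, conditioning on the high-probability event where the bound holds and treating $G_N$ as deterministic there upgrades Theorem~\ref{th:det-seq} to the random setting.

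The central estimate is this: $\com(U)\ge \ell$ iff there exists $V\subseteq V_N\setminus U$ with $|V|=\ell$ carrying no edges of $G_N'$ to $U$. For a fixed such $(U,V)$ this event has probability at most
\begin{equation*}
\prod_{u\in U,\,v\in V}(1-p_{uv}^N)\;\le\;\exp\!\bigl(-|U|\,|V|\,p_{\min}^N\bigr),\qquad p_{\min}^N:=\inf_{u\ne v}p_{uv}^N.
\end{equation*}
Since $\com(\cdot)$ is monotone decreasing in its argument, the supremum defining $\dis_j(G_N',\varepsilon)$ is attained at $|U|=\lceil \varepsilon g(N)\rceil$ with $g(N)\in\{N,\sqrt N\}$, so a union bound gives
\begin{equation*}
\mathbbm{P}\!\bigl(\dis_j(G_N',\varepsilon)\ge \ell\bigr)\;\le\;\binom{N}{\lceil\varepsilon g(N)\rceil}\binom{N}{\ell}\exp\!\bigl(-\varepsilon\,\ell\,g(N)\,p_{\min}^N\bigr).
\end{equation*}

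For part~(i) take $g(N)=N$ and $\ell=\lceil \delta N\rceil$ for arbitrary $\delta>0$. Bounding the binomial coefficients by the binary entropy, $\binom{N}{\alpha N}\le \exp(N H(\alpha))$, the log of the bound equals $N\bigl(O(1)-\varepsilon\delta\,N\,p_{\min}^N\bigr)$, which diverges to $-\infty$ because $Np_{\min}^N\to\infty$. For part~(ii) take $g(N)=\sqrt N$ and $\ell=\lceil \delta\sqrt N\rceil$. Using $\binom{N}{k}\le (eN/k)^k$ yields $\log\binom{N}{\lceil\varepsilon\sqrt N\rceil}\le \tfrac{\varepsilon}{2}\sqrt N \log N+O(\sqrt N)$ and similarly for the second factor, so the log of the bound is $\tfrac{\varepsilon+\delta}{2}\sqrt N\log N-\varepsilon\delta\, N\, p_{\min}^N+O(\sqrt N)$. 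The hypothesis $p_{\min}^N=\omega(\log N/\sqrt N)$ is exactly what makes $N p_{\min}^N$ dominate $\sqrt N\log N$, so the exponent again tends to $-\infty$. In both cases $\mathbbm{P}(\dis_j(G_N',\varepsilon)/g(N)>\delta)\to 0$ for every $\varepsilon,\delta>0$, which is precisely the deterministic hypothesis of Theorem~\ref{th:det-seq}.

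The main difficulty is in part~(ii), where the combinatorial factor $\binom{N}{\Theta(\sqrt N)}$ contributes an essentially unavoidable $\exp(\Theta(\sqrt N\log N))$ term. This pins down the threshold at $\log N/\sqrt N$: any weaker growth of $p_{\min}^N$ would leave entropy-sized fluctuations that a first-moment argument cannot absorb, so improving the theorem would require exploiting cancellations not visible from a plain union bound.
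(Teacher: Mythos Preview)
Your proposal is correct and follows essentially the same route as the paper: both verify the hypotheses of Theorem~\ref{th:det-seq} by a union bound over pairs of vertex sets $(U,V)$ with no cross-edges, bounding the edge-absence probability by $\exp(-|U||V|p_{\min}^N)$ and checking that this beats the binomial entropy factors under the stated growth of $p_{\min}^N$. The only cosmetic differences are that you plug in $\ell=\lceil\delta g(N)\rceil$ directly (the paper introduces an auxiliary $n(N)=o(g(N))$ and shows $\dis_j\le n(N)$ whp, which is marginally stronger but unnecessary), and you make explicit the monotonicity step $\dis_j(G_N,\varepsilon)\le\dis_j(G_N',\varepsilon)$ for the supergraph transfer, which the paper leaves implicit.
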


The proof of Theorem~\ref{th:inhom} relies on Theorem~\ref{th:det-seq}.
Specifically, if $G_N$ satisfies conditions~(i) and~(ii) in
Theorem~\ref{th:inhom}, then the corresponding conditions~(i) and~(ii)
in Theorem~\ref{th:det-seq} hold.
\begin{proof}[{Proof of Theorem~\ref{th:inhom}}]
In this proof we will verify the conditions stated in Theorem~\ref{th:det-seq} for fluid and diffusion level optimality.
Fix any $\varepsilon>0$.

(i) Observe that for $G_N = (V_N, E_N)$ as described in Theorem~\ref{th:inhom} (i), we have $p(N):=\inf\ \{p^N_{uv}: u,v\in V_N\}$ with $Np(N)\to \infty$ as $N\to\infty$. 
For any two subsets $V_1$, $V_2\subseteq V_N$, denote by $E_N(V_1, V_2)$ the number of cross-edges between $V_1$ and $V_2$.
Now, for any function $n:\N\to\N$, 
\begin{equation}\label{eq:fluid-bound}
\begin{split}
&\Pro{\exists\ V_1, V_2\subseteq V_N:~|V_1|\geq\varepsilon N,\ |V_2|\geq n(N), E_N(V_1,V_2) = 0}\\&\\
&= \Pro{\exists\ V_1, V_2\subseteq V_N:|V_1|=\varepsilon N,\ |V_2|= n(N), E_N(V_1,V_2) = 0}\\&\\
&\leq {N(1-\varepsilon) \choose \varepsilon N} {N-2\varepsilon N \choose n(N)}(1-p(N))^{\varepsilon N n(N)}\\&\\
&\lesssim \frac{1}{[\varepsilon^\varepsilon(1-\varepsilon)^{1-\varepsilon}]^N}\times\frac{\big(\frac{N}{n(N)}\big)^{n(N)}}{\big(1-\frac{n(N)}{N(1-\varepsilon)}\big)^{N(1-\varepsilon)}}\times \exp(-\varepsilon Np(N) n(N))\\&\\
&\lesssim \dfrac{\exp(-\varepsilon Np(N)n(N))\times \exp(n(N)\log(N))}{\exp(N\log [\varepsilon^\varepsilon(1-\varepsilon)^{1-\varepsilon}])\exp(-n(N))},
\end{split}
\end{equation}
where the first equality is due to the fact that if there are two  sets of vertices $V_1$ and $V_2$ with  $|V_1|\geq\varepsilon N$ and $|V_2|\geq n(N)$, such that there is no edge between $V_1$ and $V_2$, then the graph must contain two sets $V_1'$ and $V_2'$ of sizes exactly equal to $\varepsilon N$ and $n(N)$, respectively, such that there is no edge between $V_1'$ and $V_2'$, and vice-versa.
Choosing $n(N) = N/\sqrt{Np(N)}$ say, it can be seen that for any $p(N)$ such that $Np(N)\to\infty$ as $N\to\infty$, $n(N)/N\to 0$ and the above probability goes to 0. Therefore for any $\varepsilon, \delta>0$, \eqref{eq:fluid-bound} yields
\begin{align*} 
&\Pro{\dis_1(G_N,\varepsilon)>\delta N}\leq \Pro{\exists\ U\subseteq V_N:\ |U|\geq \varepsilon N \mbox{ and }\com(U)\geq \delta N}\to 0,
\end{align*}
as $N\to\infty$.

(ii) Again, for $G_N = (V_N, E_N)$ as described in Theorem~\ref{th:inhom} (i), we have $p(N):=\inf\ \{p^N_{uv}: u,v\in V_N\}$ with $Np(N)/(\sqrt{N}\log(N))\to \infty$ as $N\to\infty$.  
Now as in Part (i), for any function $n:\N\to\N$,
\begin{equation}\label{eq:diff-bound}
\begin{split}
&\Pro{\exists\ V_1, V_2\subseteq V_N:|V_1|\geq\varepsilon \sqrt{N},\ |V_2|\geq n(N), E_N(V_1,V_2) = 0}
\\&\\
&= \Pro{\exists\ V_1, V_2\subseteq V_N:|V_1|=\varepsilon \sqrt{N},\ |V_2|= n(N), E_N(V_1,V_2) = 0}
\\&\\
&\leq {N-\varepsilon\sqrt{N} \choose \varepsilon \sqrt{N}} {N-2\varepsilon\sqrt{N} \choose n(N)}(1-p(N))^{\varepsilon\sqrt{N} n(N)}
\\&\\
&\lesssim N^{\varepsilon\sqrt{N}/2}\exp(\varepsilon\sqrt{N}) \times
N^{n(N)}\times \exp(-\varepsilon\sqrt{N}p(N)n(N))\\
&\hspace{1.5cm} \times\exp\Big(\frac{-\varepsilon n(N)}{\sqrt{N}}+n(N)\Big(1-\frac{n(N)}{N-\varepsilon\sqrt{N}}\Big)\Big).
\end{split}
\end{equation}
Choosing $n(N) = \sqrt{N}/\sqrt{\sqrt{N}p(N)/\log(N)}$, it can be seen that as $N\to\infty,$ $n(N)/\sqrt{N}\to 0$ and the above probability converges to 0.
 Therefore for any $\varepsilon, \delta>0$, \eqref{eq:diff-bound} yields
\begin{align*} 
&\Pro{\dis_2(G_N,\varepsilon)>\delta \sqrt{N}}
\leq \Pro{\exists\ U\subseteq V_N:\ |U|\geq \varepsilon \sqrt{N} \mbox{ and }\com(U)\geq \delta \sqrt{N}}\to 0, 
\end{align*}
as $N\to\infty$.
This completes the proof of Theorem~\ref{th:inhom}.
\end{proof}

As an immediate corollary to Theorem~\ref{th:inhom} we obtain an optimality result for the sequence of Erd\H{o}s-R\'enyi random graphs.

\begin{corollary}\label{cor:errg}
Let $\GG= \{G_N\}_{N\geq 1}$ be a graph sequence such that for each $N$,
$G_N$ is a supergraph of $\ER_N(p(N))$, and $d(N) = (N-1)p(N)$. Then
{\normalfont (i)}
If $d(N)\to\infty$ as $N\to\infty$, then $\GG$ is $N$-optimal.
{\normalfont (ii)}
If $d(N)/(\sqrt{N}\log(N))\to\infty$ as $N\to\infty$, then $\GG$ is $\sqrt{N}$-optimal.
\end{corollary}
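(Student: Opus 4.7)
The plan is to recognize that Corollary~\ref{cor:errg} is an essentially immediate specialization of Theorem~\ref{th:inhom}: an Erd\H{o}s-R\'enyi random graph $\ER_N(p(N))$ is exactly the inhomogeneous random graph in which every edge probability $p^N_{uv}$ equals the common value $p(N)$. So $G_N$ being a supergraph of $\ER_N(p(N))$ places it in the framework of Theorem~\ref{th:inhom}, and it only remains to translate the hypotheses on $d(N)$ into the required growth conditions on $\inf\{p^N_{uv}:u,v\in V_N\} = p(N)$.

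For part~(i), I would simply observe that with $d(N) = (N-1)p(N)$, the hypothesis $d(N) \to \infty$ is equivalent to $N p(N) \to \infty$, which is the condition $p(N) = \omega(1/N)$. Applying Theorem~\ref{th:inhom}(i) then yields $N$-optimality. For part~(ii), I would note that $d(N)/(\sqrt{N}\log(N)) \to \infty$ is equivalent to $p(N)\sqrt{N}/\log(N) \to \infty$ (since the prefactor $N/(N-1)$ tends to $1$), i.e., $p(N) = \omega(\log(N)/\sqrt{N})$. Applying Theorem~\ref{th:inhom}(ii) then yields $\sqrt{N}$-optimality.

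There is essentially no obstacle beyond this bookkeeping; the substantive content of the corollary is entirely inherited from Theorem~\ref{th:inhom}, whose proof controlled $\dis_1$ and $\dis_2$ via the union bounds in \eqref{eq:fluid-bound} and \eqref{eq:diff-bound}. The only subtlety worth flagging is the supergraph clause: since a supergraph can only \emph{decrease} $\com(U)$ for every subset $U$, both $\dis_1(G_N,\varepsilon)$ and $\dis_2(G_N,\varepsilon)$ are bounded above by their counterparts in $\ER_N(p(N))$, so verifying the hypothesis of Theorem~\ref{th:det-seq} for $\ER_N(p(N))$ suffices for every supergraph as well; this is already built into Theorem~\ref{th:inhom} and requires no new argument.
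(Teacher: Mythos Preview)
Your proposal is correct and matches the paper's treatment exactly: the paper states Corollary~\ref{cor:errg} as an immediate consequence of Theorem~\ref{th:inhom} without giving a separate proof, and your reduction---specializing $p^N_{uv}\equiv p(N)$ and translating $d(N)=(N-1)p(N)$ into the $\omega(1/N)$ and $\omega(\log(N)/\sqrt{N})$ conditions---is precisely the intended argument.
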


Theorem~\ref{th:det-seq} can be further leveraged to establish the
optimality of the following sequence of random graphs.
For any $N \geq 1$ and $d(N) \leq N-1$ such that $N d(N)$ is even,
construct the \emph{erased random regular} graph on $N$~vertices
as follows:
Initially, attach $d(N)$ \emph{half-edges} to each vertex. 
Call all such half-edges \emph{unpaired}.
At each step, pick one half-edge arbitrarily, and pair it to another
half-edge uniformly at random among all unpaired half-edges
to form an edge, until all the half-edges have been paired.
This results in a uniform random regular multi-graph with degree
$d(N)$~\cite[Proposition 7.7]{remco-book-1}. 
Now the erased random regular graph is formed by erasing all the
self-loops and multiple edges, which then produces a simple graph.

\begin{theorem}
\label{th:reg}
Let $\GG= \{G_N\}_{N\geq 1}$ be a sequence of erased random regular
graphs with degree $d(N)$. Then
{\normalfont (i)}
If $d(N)\to\infty$ as $N\to\infty$, then $\GG$ is $N$-optimal.
{\normalfont (ii)}
If $d(N)/(\sqrt{N}\log(N))\to\infty$ as $N\to\infty$, then $\GG$ is $\sqrt{N}$-optimal.
\end{theorem}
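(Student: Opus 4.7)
The plan is to verify the two conditions of Theorem~\ref{th:det-seq} directly for $G_N$. A preliminary observation streamlines the analysis: the erasure step removes only self-loops (which contribute nothing to $N[v]$ for any other vertex $v$) and duplicate copies of multi-edges (which still leave the two endpoints adjacent). Consequently, for every $U\subseteq V_N$ the value of $\com(U)$ in the erased graph coincides with the corresponding quantity in the underlying configuration multi-graph. It therefore suffices to control $\dis_1(G_N,\varepsilon)$ and $\dis_2(G_N,\varepsilon)$ for the configuration multi-graph on $N$ vertices of uniform degree $d(N)$.

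The central technical estimate is that for any two disjoint subsets $V_1, V_2\subseteq V_N$ with $|V_1|=a$ and $|V_2|=w$, the probability that no half-edge of $V_1$ is paired with a half-edge of $V_2$ is bounded by $\exp(-c\,awd(N)/N)$ for some absolute constant $c>0$, provided $awd(N)/N\to\infty$. Heuristically, the expected number of such cross-pairings is $awd(N)^2/(Nd(N)-1) \asymp awd(N)/N$, and it concentrates around its mean. I would establish the bound by revealing the partners of $V_1$'s half-edges one at a time: conditional on none of the previously processed half-edges having been paired to $V_2$, the next unpaired half-edge hits a $V_2$ half-edge with probability at least $wd(N)/(Nd(N)-1)$, and multiplying over the at least $ad(N)/2$ non-trivial trials yields the exponential bound. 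Alternatively, one can count configuration-model pairings that avoid all $V_1$-$V_2$ edges and compare against $(Nd(N)-1)!!$.

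The proof then concludes via a union bound mirroring the proof of Theorem~\ref{th:inhom}. For part~(i), fix $\varepsilon>0$, take $a=\lceil\varepsilon N\rceil$, and choose $n(N)=\lceil N/\sqrt{d(N)}\rceil$, which is $o(N)$ since $d(N)\to\infty$. Then
\[
\Pro{\dis_1(G_N,\varepsilon)>n(N)}\leq \binom{N}{a}\binom{N}{n(N)}\exp\bigl(-c\,\varepsilon\,n(N)\,d(N)\bigr),
\]
and taking logarithms together with $\binom{N}{k}\leq (eN/k)^k$ shows that the negative contribution $-c\varepsilon N\sqrt{d(N)}$ dominates the combinatorial factors $O(N)+O(n(N)\log(N/n(N)))$, so this probability tends to zero. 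For part~(ii), the analogous argument with $a=\lceil\varepsilon\sqrt{N}\rceil$ and an appropriately chosen $m(N)=o(\sqrt{N})$ for which $m(N)d(N)/\sqrt{N}$ dominates $\sqrt{N}\log N$ — e.g., $m(N)=\sqrt{N}\log(N)\cdot\bigl(\sqrt{N}\log(N)/d(N)\bigr)^{1/2}$, using $d(N)/(\sqrt{N}\log N)\to\infty$ — yields the diffusion-scale estimate. A final appeal to Theorem~\ref{th:det-seq} delivers both $N$-optimality and $\sqrt{N}$-optimality.

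The main obstacle is the concentration step. Because pairings are generated by replacement-free uniform sampling of half-edges, the cross-edge indicators are dependent and a naive Chernoff bound does not apply. A rigorous justification requires either a bounded-differences martingale argument on the sequential pairing process or a direct combinatorial count, both of which are standard in the configuration-model literature. The remaining ingredients — the reduction from the erased graph to the multi-graph, the parameter tuning, and the invocation of Theorem~\ref{th:det-seq} — are routine variants of arguments already developed for Theorem~\ref{th:inhom}.
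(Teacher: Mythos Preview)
Your proposal follows essentially the same route as the paper: verify the hypotheses of Theorem~\ref{th:det-seq} by combining a union bound over pairs of vertex sets with an exponential upper bound on the probability that two prescribed sets have no cross half-edge in the configuration model. The paper executes the latter via the explicit double-factorial count you mention as your second option, and in part~(i) uses the identical choice $n(N)=N/\sqrt{d(N)}$. Your remark that erasure leaves $\com(U)$ unchanged is a clean reduction the paper leaves implicit.

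One slip in part~(ii): your example $m(N)=\sqrt{N}\log N\cdot\bigl(\sqrt{N}\log N/d(N)\bigr)^{1/2}$ is \emph{not} $o(\sqrt{N})$ under the stated hypothesis. Writing $\alpha(N)=d(N)/(\sqrt{N}\log N)\to\infty$, your $m(N)/\sqrt{N}=\log N/\sqrt{\alpha(N)}$, which need not vanish (take $\alpha(N)=\log N$). Dropping the extra $\log N$ and taking $m(N)=\sqrt{N}/\sqrt{\alpha(N)}$, as the paper does, gives both $m(N)=o(\sqrt{N})$ and $m(N)d(N)/\sqrt{N}=\sqrt{N}\log N\cdot\sqrt{\alpha(N)}\gg\sqrt{N}\log N$, which is exactly what the union bound needs.
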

\begin{proof}[Proof of Theorem~\ref{th:reg}]
We will again verify the conditions stated in Theorem~\ref{th:det-seq} for fluid and diffusion level optimality.
For $k\geq 1$, denote $(2k-1)!! = (2k-1)(2k-3)\ldots3.1.$
Fix any $\varepsilon>0$.

(i) For any function $n:\N\to\N$,
\begin{equation}\label{eq:fluid-bound-reg}
\begin{split}
&\Pro{\exists\ V_1, V_2\subseteq V_N:|V_1|\geq\varepsilon N,\ |V_2|\geq n(N), E_N(V_1,V_2) = 0}\\&\\
&= \Pro{\exists\ V_1, V_2\subseteq V_N: |V_1|=\varepsilon N,\ |V_2|= n(N), E_N(V_1,V_2) = 0}\\&\\
&\leq {N \choose \varepsilon N} {N-\varepsilon N \choose n(N)}\frac{(Nd(N)(1-\varepsilon)-1)!!}{(Nd(N)-1)!!}\times\frac{(Nd(N)-n(N)d(N)-1)!!}{(Nd(N)(1-\varepsilon)-n(N)d(N)-1)!!}\\&\\
&\lesssim \frac{1}{[\varepsilon^\varepsilon(1-\varepsilon)^{1-\varepsilon}]^N}\times\frac{\big(\frac{N}{n(N)}\big)^{n(N)}}{\big(1-\frac{n(N)}{N(1-\varepsilon)}\big)^{N(1-\varepsilon)}}\times \exp(-\varepsilon n(N)d(N))\\&\\
&\lesssim \dfrac{\exp(-\varepsilon d(N)n(N))\times \exp(n(N)\log(N))}{\exp(N\log [\varepsilon^\varepsilon(1-\varepsilon)^{1-\varepsilon}])\exp(-n(N))}.
\end{split}
\end{equation}
Choosing $n(N) = N/\sqrt{d(N)}$ say, it can be seen that for any $p(N)$ such that $d(N)\to\infty$ as $N\to\infty$, $n(N)/N\to 0$ and the above probability goes to 0. Therefore for any $\varepsilon, \delta>0$, \eqref{eq:fluid-bound-reg} yields
\begin{align*} 
&\Pro{\dis_1(G_N,\varepsilon)>\delta N}
\leq \Pro{\exists\ U\subseteq V_N:\ |U|\geq \varepsilon N \mbox{ and }\com(U)\geq \delta N}\to 0, \mbox{ as } N\to\infty.
\end{align*}

(ii) Again, as in Part (i), for any function $n:\N\to\N$,
\begin{equation}\label{eq:diff-bound-reg}
\begin{split}
&\Pro{\exists\ V_1, V_2\subseteq V_N:|V_1|\geq\varepsilon \sqrt{N},\ |V_2|\geq n(N), E_N(V_1,V_2) = 0}
\\&\\
&= \Pro{\exists\ V_1, V_2\subseteq V_N: |V_1|=\varepsilon \sqrt{N},\ |V_2|= n(N) E_N(V_1,V_2) = 0}
\\&\\
&\leq {N \choose \varepsilon \sqrt{N}} {N-\varepsilon\sqrt{N} \choose n(N)}\frac{(Nd(N)-\varepsilon\sqrt{N}d(N)-1)!!}{(Nd(N)-1)!!}\times\frac{(Nd(N)-n(N)d(N)-1)!!}{(Nd(N)-\varepsilon\sqrt{N}d(N)-n(N)d(N)-1)!!}
\\&\\
&\lesssim \exp\Big(\frac{\varepsilon\sqrt{N}\log(N)}{2}-\frac{n(N)d(N)}{\sqrt{N}}\Big).
\end{split}
\end{equation}
Now, choosing $n(N) = \sqrt{N}/\sqrt{d(N)/(\sqrt{N}\log(N))}$, it can be seen that as $N\to\infty,$ $n(N)/\sqrt{N}\to 0$ and the above probability converges to 0.
 Therefore for any $\varepsilon, \delta>0$, \eqref{eq:diff-bound-reg} yields
\begin{align*} 
&\Pro{\dis_2(G_N,\varepsilon)>\delta \sqrt{N}}
\leq \Pro{\exists\ U\subseteq V_N:\ |U|\geq \varepsilon \sqrt{N} \mbox{ and }\com(U)\geq \delta \sqrt{N}}\to 0,
\end{align*}
as $N\to\infty$.
\end{proof}

Note that due to Theorem~\ref{th:bdd-deg}, we can conclude that the growth rate condition on degrees for $N$-optimality in Corollary~\ref{cor:errg}~(i) and Theorem~\ref{th:reg}~(i) is not only sufficient, but necessary as well.
Thus informally speaking, $N$-optimality is achieved under the minimum condition required as long as the underlying topology is suitably random.

\begin{figure}
\begin{center}
\includegraphics[width=85mm]{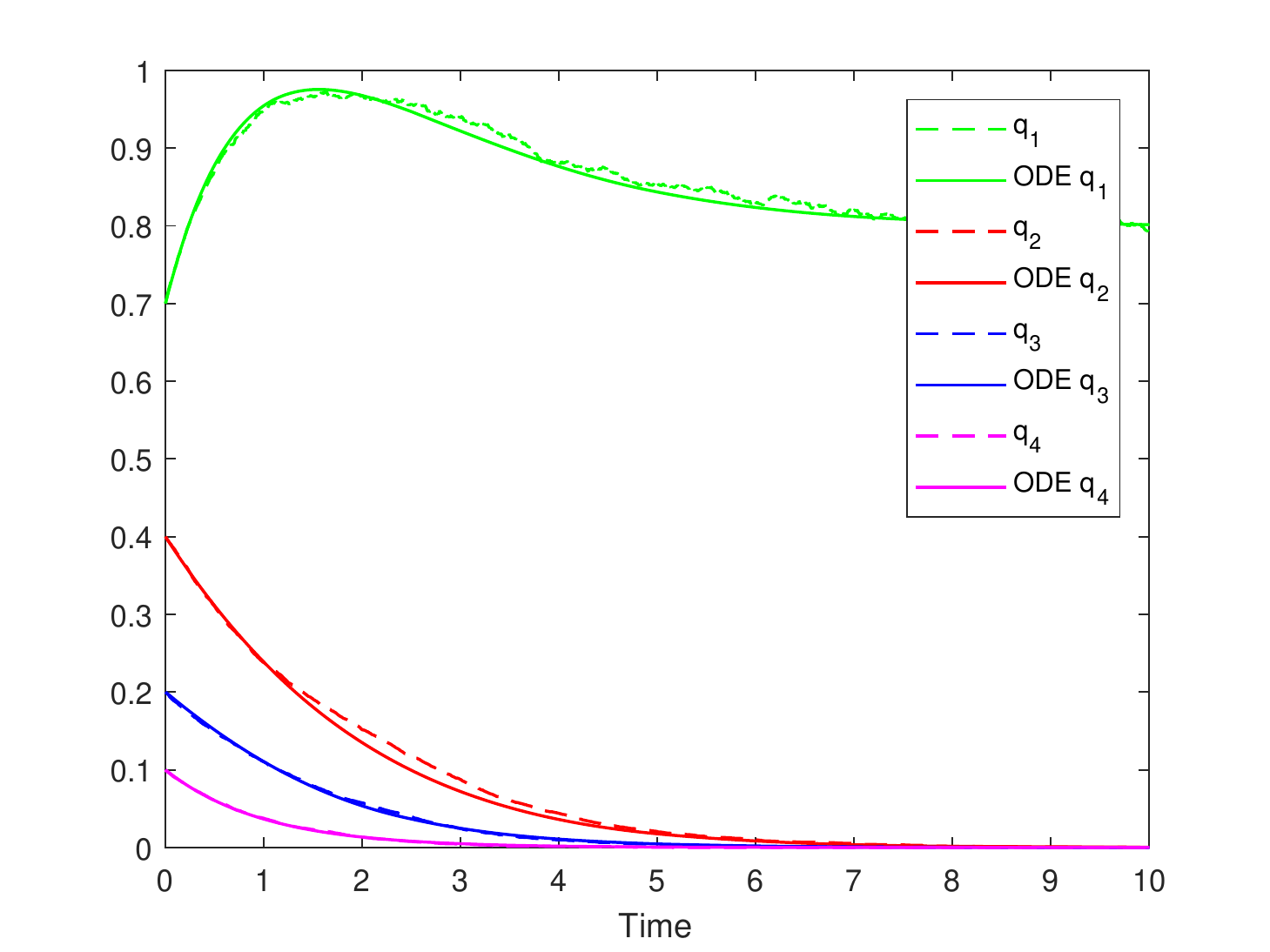}
\end{center}
\caption{Illustration of the fluid-limit trajectories for $\lambda=0.8$ along with a simulation for $N = 10^4$ servers. 
The topology is a single instance of the ERRG on $N=10^4$ vertices with edge probability $1/\sqrt{N}=10^{-2}$, i.e.~the average degree is~100.}
\label{fig:trajectory}
\end{figure}
\begin{figure}
\begin{center}
\includegraphics[width=85mm]{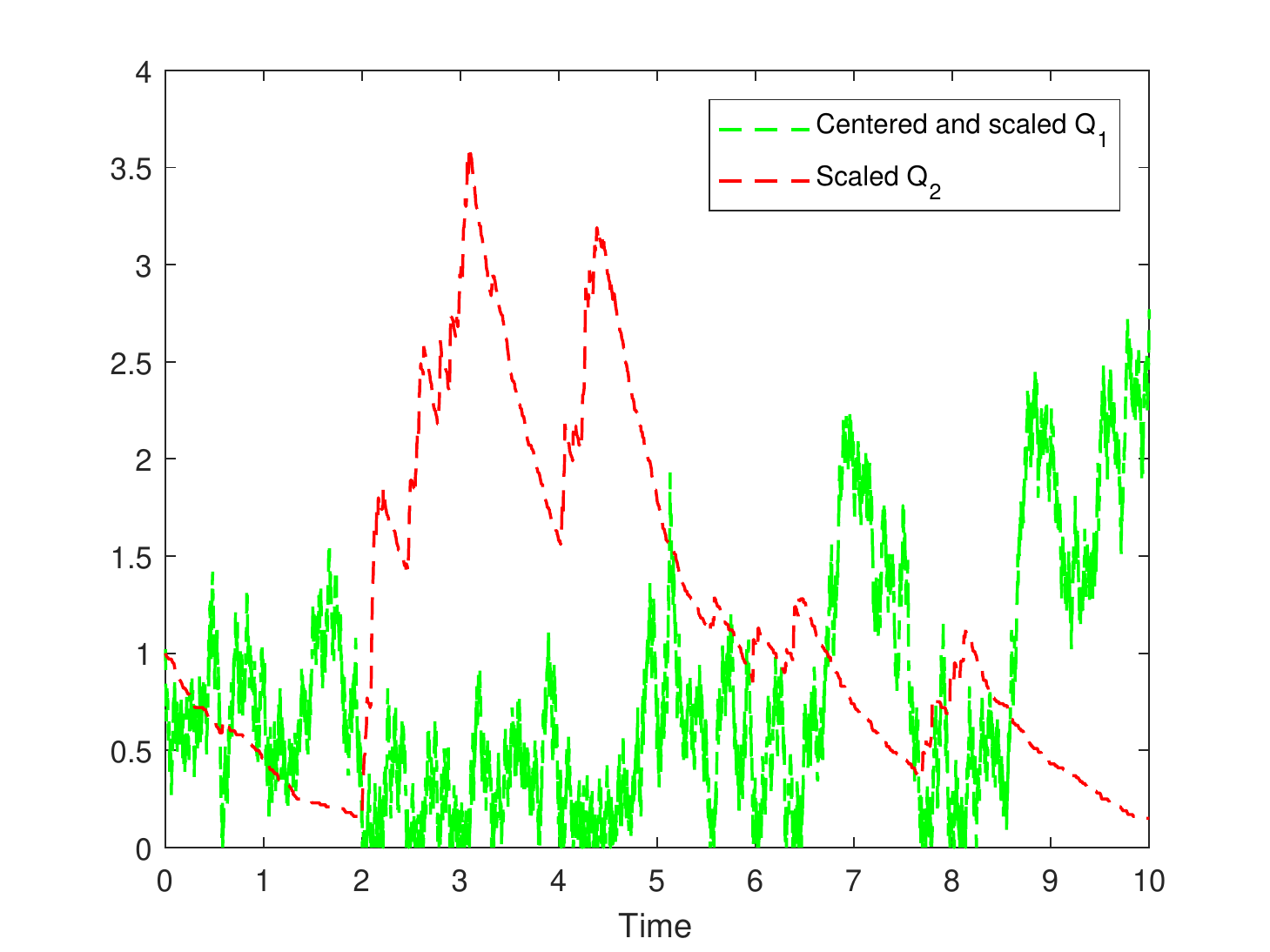}
\end{center}
\caption{Illustration of the diffusion-scaled trajectories in the Halfin-Whitt heavy-traffic regime, for $N = 10^4$ servers and $\lambda(N)= N - \sqrt{N}=9900$.
The topology is a single instance of the ERRG on $N=10^4$ vertices with edge probability $\log(N)^2/\sqrt{N}=0.8483$, i.e.~the average degree is 8483.
}
\label{fig:ht}
\end{figure}
\begin{figure}
\begin{center}
\includegraphics[width=85mm]{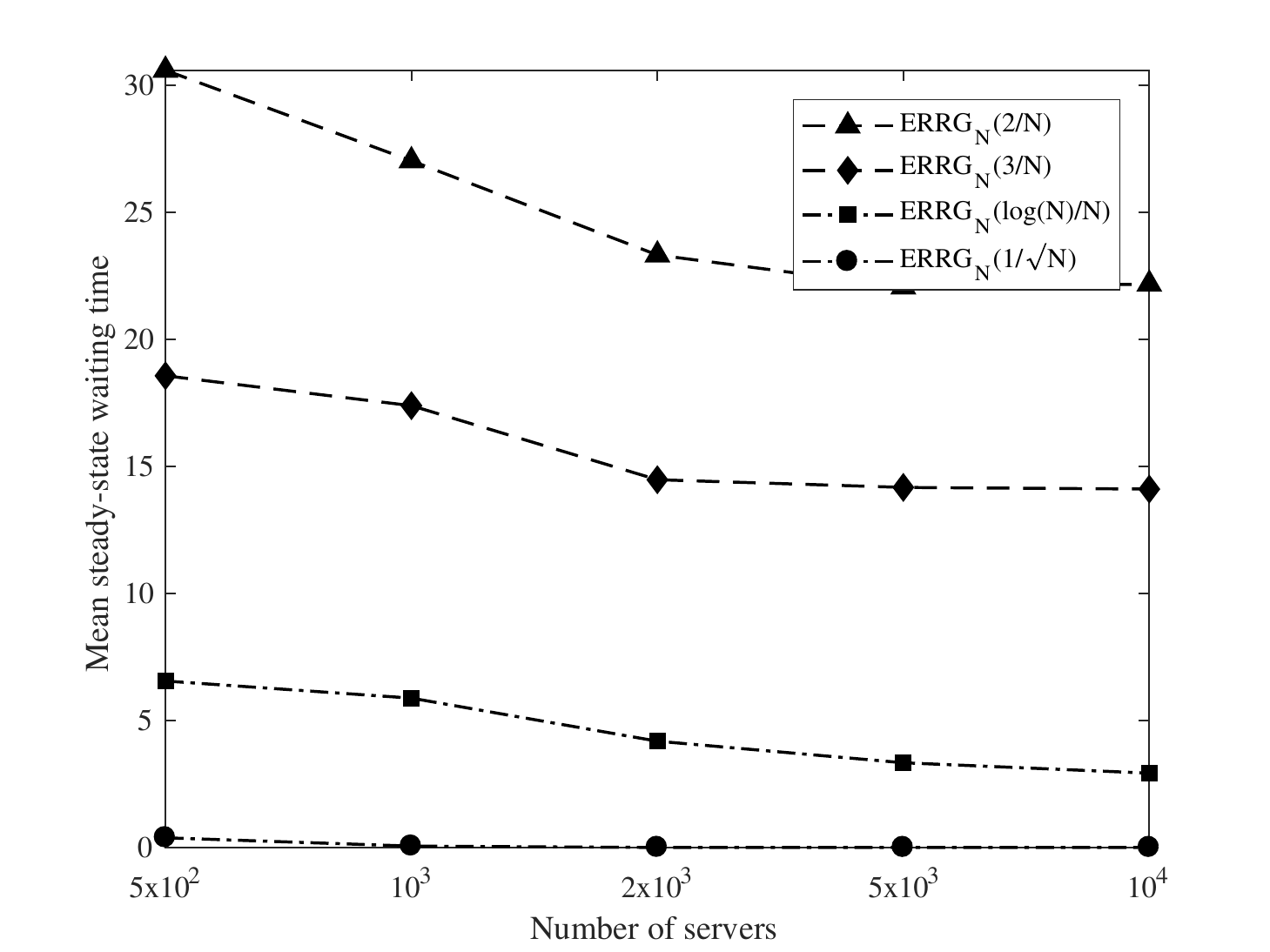}
\end{center}
\caption{Mean steady-state waiting times for $\lambda = 0.9$ and increasing number of servers in ERRG on $N$ vertices with edge probability $c(N)/N$, for $c(N) = 2,3,\log(N),$ and  $\sqrt{N}$.
}
\label{fig:steady-conv}
\end{figure}
\begin{figure}
\begin{center}$
\begin{array}{ccc}
\includegraphics[width=85mm]{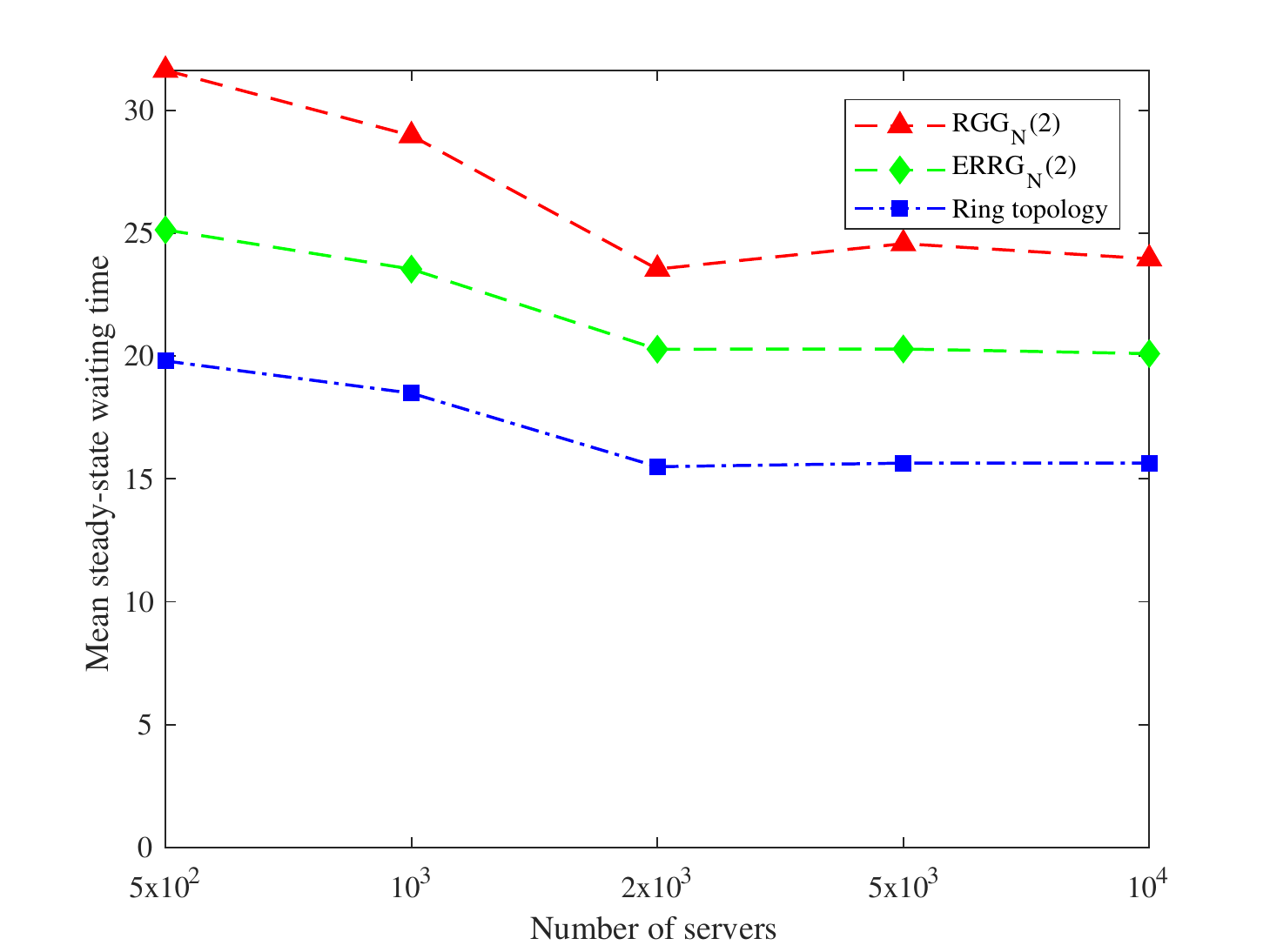}\\
\includegraphics[width=85mm]{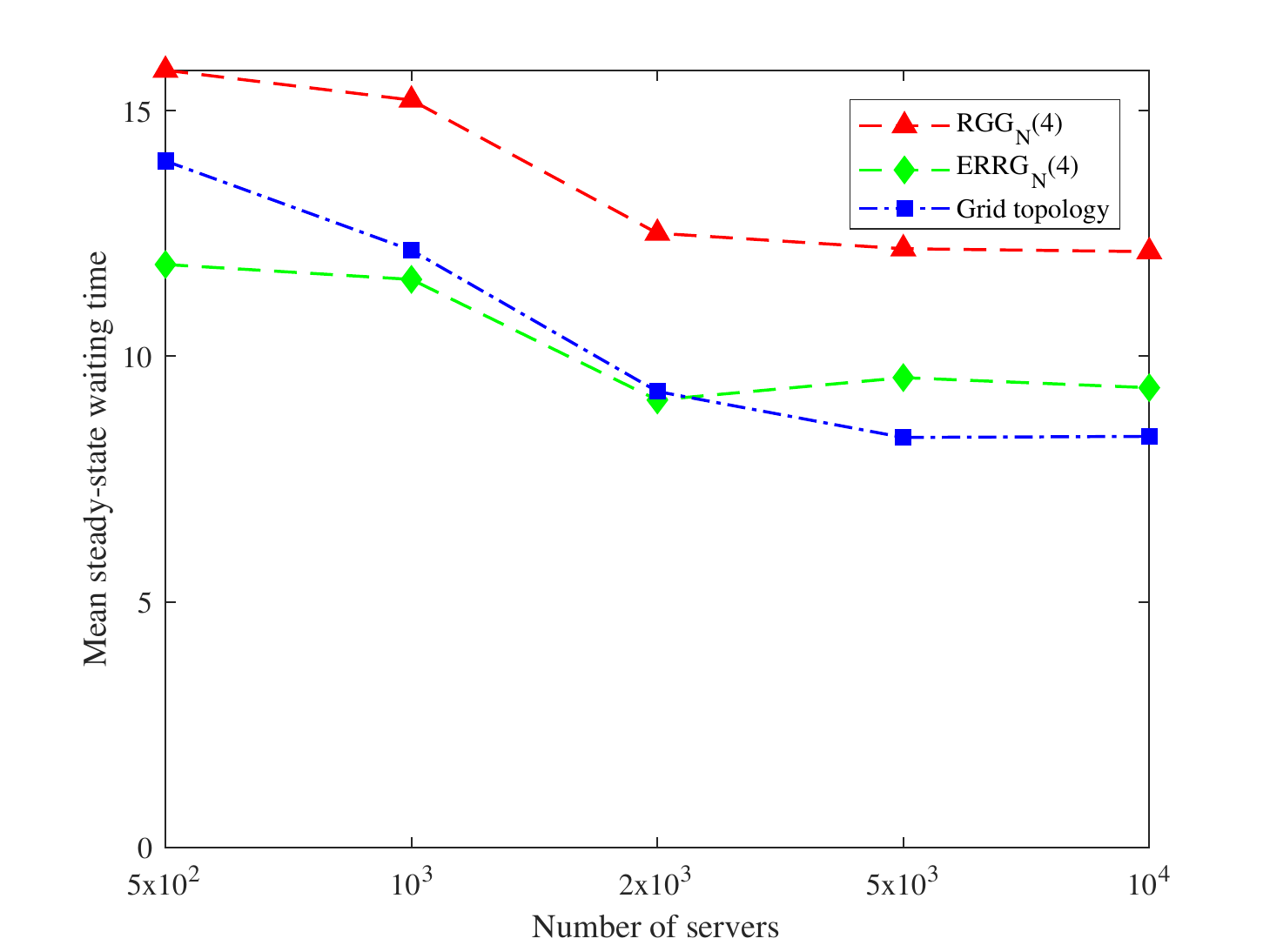}
\end{array}$
\end{center}
\caption{(Top) Performance of the ring topology, and the RGG and ERRG with average degree 2 compared in terms of mean steady-state waiting times. (Bottom) Performance of the grid topology, and the RGG and ERRG with average degree 4 compared in terms of mean steady-state waiting times.}
\label{fig:ring-grid}
\end{figure}
\section{Simulation experiments}
\label{sec:simulation}
In this section we present extensive simulation results to illustrate the fluid and diffusion-limit results, and compare the performance of various graph topologies in terms of mean waiting times.

\vspace{.25cm}
\noindent
{\bf Convergence of sample paths to fluid and diffusion-limit trajectories.}
The fluid-limit trajectory for $\lambda=0.8$ is illustrated in Figure~\ref{fig:trajectory} along with a simulation for $N=10^4$ servers. 
The solid curves represent the case of a clique (i.e.~corresponding to the limit of the occupancy states for the ordinary JSQ policy) as described in Theorem~\ref{fluidjsqd} in the appendix. The dotted lines correspond to the empirical occupancy process when
the underlying graph topology is a single instance of the Erd\H{o}s-R\'enyi random graph (ERRG) on $N=10^4$ vertices with edge probability $1/\sqrt{N}=10^{-2}$, so the average degree is~100.
Even for a topology much sparser than a clique and  finite $N$-value, the simulated path matches closely with the limiting ODE.
In particular, the above suggests that for a large but finite degree, the behavior may be hard to distinguish from the optimal one for all practical purposes, and there seems to be no prominent effect of graph topologies provided the underlying topology is suitably random.

The diffusion-scaled trajectory has been simulated for $N=10^4$ servers in Figure~\ref{fig:ht}. 
The system load $1-1/\sqrt{N}=0.99$ is quite close to 1. 
The underlying graph topology is taken to be a single instance of the ERRG on $N$ vertices with edge probability $\log(N)^2/\sqrt{N}$.
The green and red curves in Figure~\ref{fig:ht} correspond to the centered and scaled occupancy state processes $-\bQ_1(G_N,\cdot)$ and $\bQ_2(G_N,\cdot)$, respectively.
As stated in Corollary~\ref{cor:errg}, the centered and diffusion-scaled trajectories can be observed to be recurrent, and the rate of decrease $\bQ_2(G_N,\cdot)$ seems to be proportional to its value --- resembling some properties of the reflected Ornstein-Uhlenbeck process as in the case of a clique (i.e.~the limit of the ordinary JSQ policy) as stated in Theorem~\ref{diffusionjsqd} in the appendix.

\vspace{.25cm}
\noindent
{\bf Convergence of steady-state waiting times.} 
Figure~\ref{fig:steady-conv} exhibits convergence of mean steady-state waiting times to their limiting values as $N\to\infty$.
By virtue of Little's law, note that the asymptotic mean steady-state waiting time can be expressed in terms of the fixed point of the fluid limit as $\lambda^{-1}\sum_{i\geq 2}q_i$.
For each $N$ and average degree $c(N)$ with $c(N)=2$, 3, $\log(N)$, and $\sqrt{N}$, an instance of ERRG on $N$ vertices with average degree $c(N)$ is taken and the time-averaged value of $\lambda^{-1}\sum_{i\geq 2}q_i^N(t)$ is plotted.
The average is taken over the time interval 0 to 200 or 250 depending on the value of $N$.
The figure shows that if the average degree grows with $N$, then the mean steady-state waiting time converges to zero, while it stays bounded away from zero in case the average degree is constant.
It can further be observed that the convergence is notably fast for a higher growth rate of the average degree.

\vspace{.25cm}
\noindent
{\bf Effect of the topology in sparse case.}
When the average degree is fixed, the effect of the topology seems to be quite prominent. This has also been observed in prior work~\cite{T98, G15}. Specifically, when comparing graphs with average degree 2, it can be seen in the top chart in Figure~\ref{fig:ring-grid} that the ring topology  has a lower mean steady-state waiting time than random topologies (ERRG or RGG). In case of average degree 4, the (toric) grid topology performs worse for small $N$-values, but the performance improves as $N$ increases. 
There are two crucial effects at play here: 
(i) The regularity in degrees of the vertices: Given a mean degree, higher variability (e.g.~presence of many isolated vertices) is expected to degrade the performance and 
(ii) The locality of the connections: Higher diversity in the connections (i.e., graphs with good expander properties) is expected to improve the performance. 
The RGG has a disadvantage in both these aspects: it contains many isolated vertices and also, its connections are highly localized, and thus its performance is consistently worse in both top and bottom charts in Figure~\ref{fig:ring-grid}. 
The ERRG and the lattice graphs (ring/grid) are good with respect to the degree variability and the connection locality, respectively. However, the presence of many isolated vertices hurts more than the benefit provided by the non-local connections when the average degree is small, as exhibited in Figure~\ref{fig:ring-grid}. In case of higher average degree, the number of isolated vertices in the ERRG is relatively small, and thus the benefit from the non-local connections becomes somewhat prominent for smaller $N$-values. It is therefore worthwhile to note that in case of increasing average degrees, the effect of topology becomes less significant, and so the behavior of random topologies (ERRG, RGG, or random regular graphs) turns out to be as good as the clique.

\vspace{.25cm}
\noindent
{\bf Effect of load on the growth rate of the average degree.}
It is expected that if the system is heavily loaded (i.e., $\lambda$ close to~1), then the rate of convergence of the steady-state measure, and hence that of the mean steady-state waiting time becomes slower.
This can be observed in Figure~\ref{fig:lambdaeffect}.
For moderately loaded systems viz.~$\lambda=0.65$ or $0.75$, the convergence is fast even for topologies that are far from fully connected with average degree as low as $\log(N)$. 
\begin{figure}
\begin{center}
\includegraphics[width=85mm]{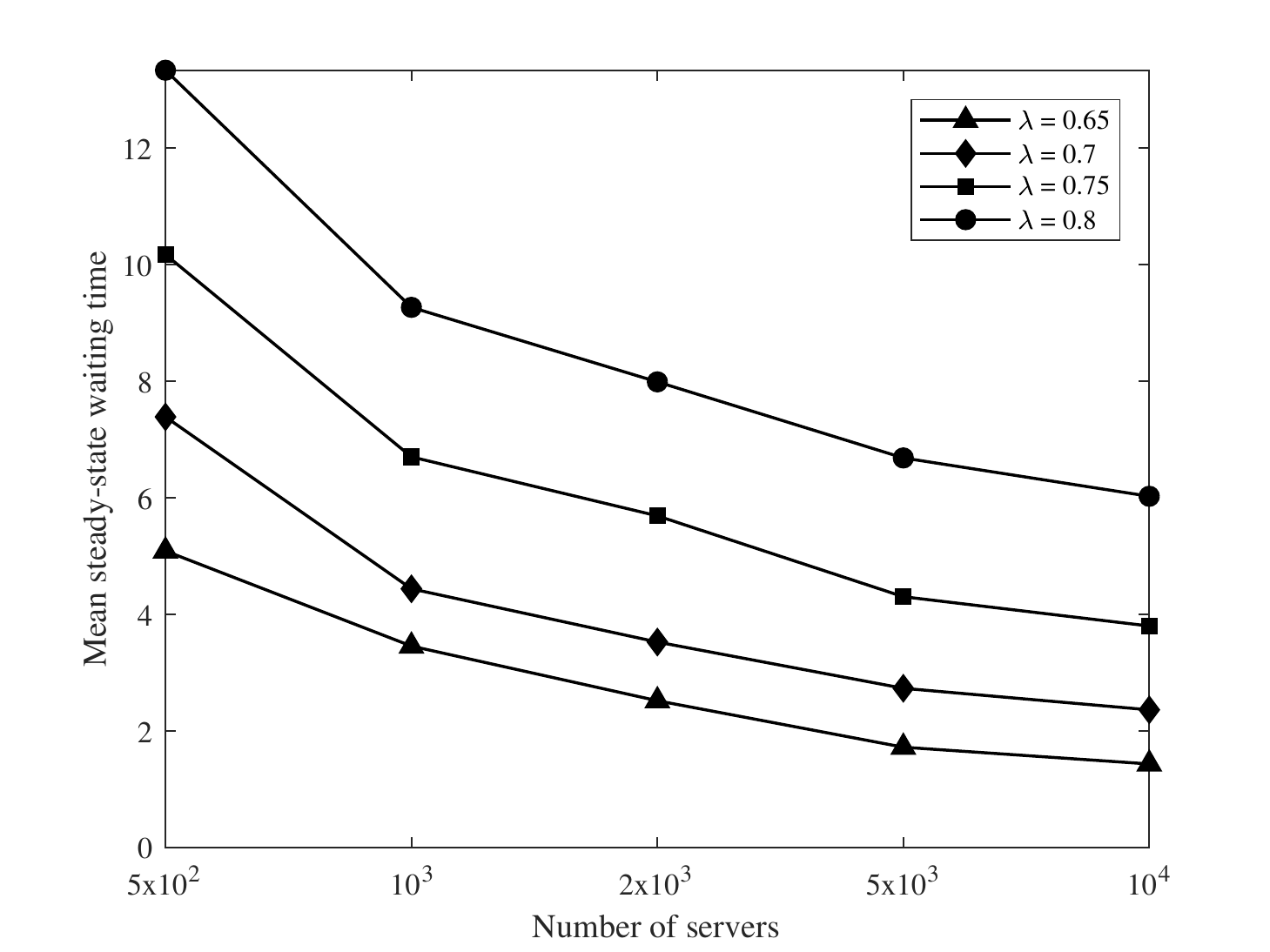}
\end{center}
\caption{Effect of $\lambda$ on the rates of convergence of mean steady-state waiting times.
The underlying topology is an ERRG on $N$ vertices with edge probability $\log(N)/N$, for an increasing number of servers.
}
\label{fig:lambdaeffect}
\end{figure}

\vspace{.25cm}
\noindent
{\bf Performance for spatial random network models.}
The conditions stated in Theorem~\ref{th:det-seq} demand that \emph{any} two \emph{large} portions of the graph share \emph{many} cross edges.
This property is often violated in spatial graph models, where vertices that are closer to each other have a higher tendency to share an edge.
A canonical model for spatial networks is the random geometric graph (RGG), where $N$ vertices correspond to $N$ uniform random locations on $[0,1]^2$ with periodic boundary, and any two vertices share an edge if they are less than a distance $r(N)$ apart.
Note that the average degree in that case is given by $c(N) = (N-1)\pi r(N)^2$.
In other words, for fixed values of $N$ and $c(N)$, the distance $r=r(N)$ scales as $r(N) = \sqrt{c(N)/(\pi N)}$.
To analyze the load balancing process on spatial random graph models, we simulated the processes where the underlying topologies are instances of RGGs on $N$ vertices and average degrees~2,~3, $\log(N)$, and $\sqrt{N}$, and plotted the corresponding mean steady-state waiting times for increasing values of $N$ in Figure~\ref{fig:steady-conv-rgg}.
\begin{figure}
\begin{center}
\includegraphics[width=85mm]{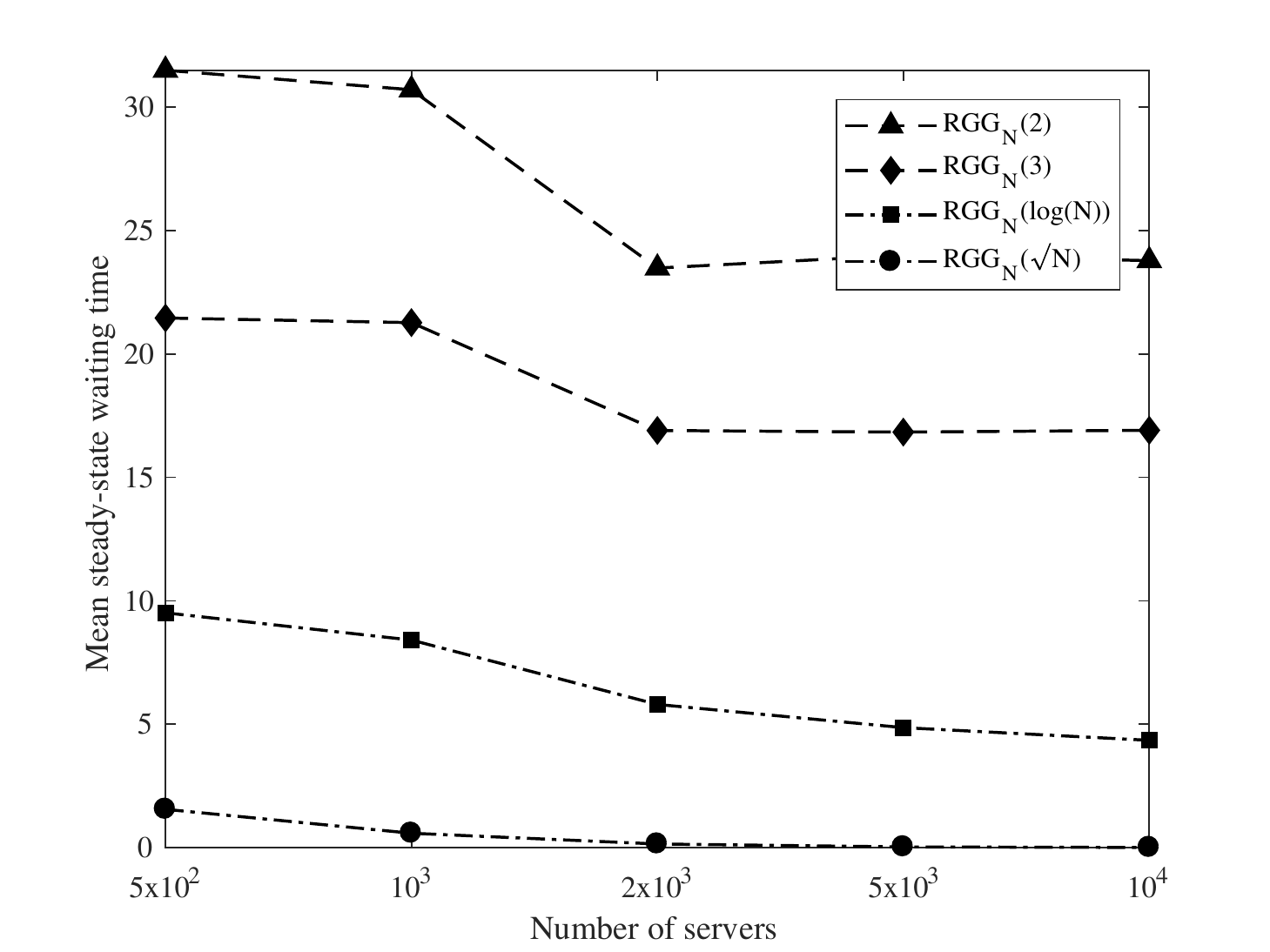}
\end{center}
\caption{Mean steady-state waiting times for $\lambda = 0.8$ and increasing number of servers in RGG on $N$ vertices with average degree $c(N)$, for $c(N) = 2,3,\log(N),$ and  $\sqrt{N}$.
}
\label{fig:steady-conv-rgg}
\end{figure}
The surprising resemblance with the ERRG scenario as depicted in Figure~\ref{fig:steady-conv} hints that the asymptotic optimality result can be preserved even under possibly a relaxed set of conditions.
This motivates future study of the asymptotic optimality 
beyond the classes of graphs we considered.

\section{Conclusion}\label{sec:conclusion}
We have considered load balancing processes in large-scale systems where the servers are inter-connected
by some graph topology.  
For arbitrary topologies we established sufficient criteria for which the performance is asymptotically similar to that in a clique, and hence optimal on suitable scales.  Leveraging these criteria we showed that unlike fixed-degree scenarios (\emph{viz.}~ring, grid) where the topology has a prominent performance impact, the sensitivity to the topology diminishes in the limit when the average degree grows with the number of servers.
In particular, a wide class of suitably random topologies are provably asymptotically optimal.
In other words, the asymptotic optimality of a clique can be achieved while dramatically reducing the number of connections.
In the context of large-scale data centers,
this translates into significant reductions in communication
overhead and storage capacity, since both are roughly proportional
to the number of connections.

Although a growing average degree is necessary in the sense that any graph with finite average degree is sub-optimal,
it is in no way sufficient. 
Load balancing performance can be provably sub-optimal even when the minimum degree is $cN + o(N)$ with $0 < c < 1/2$.  
What happens for $1/2 < c < 1$ is an open question.
Our proof technique relies heavily on a connectivity property entailing that any two sufficiently large
portions of vertices share a lot of edges.  
This property does not hold however in many networks
with connectivity governed by spatial attributes, such as geometric graphs, although the simulation experiments hint that the family of topologies that are asymptotically optimal is likely to be broader than the ERRG and random regular class as considered in the present paper.  
In future research we aim
to examine asymptotic optimality properties of such spatial network models.

\section*{Acknowledgment}
The authors thank Nikhil Bansal for helpful discussions in the early stage of this work, and also for pointing out several relevant references.
The work was financially supported by The Netherlands Organization for Scientific Research (NWO) through Gravitation Networks grant 024.002.003 and TOP-GO grant 613.001.012.

\bibliographystyle{abbrv}

\bibliography{bibl}

\end{document}